\newtheorem{theorem}{Theorem}
\newtheorem{lemma}{Lemma}
\begin{document}

\title{Space-Time Nonlinear Upscaling Framework Using Non-local Multi-continuum Approach}
\author{
Wing T. Leung\thanks{ICES, University of Texas, Austin, TX, USA (\texttt{wleung@ices.utexas.edu})}
\and
 Eric T. Chung\thanks{Department of Mathematics, The Chinese University of Hong Kong, Shatin, New Territories, Hong Kong SAR, China (\texttt{tschung@math.cuhk.edu.hk}) }
\and
Yalchin Efendiev\thanks{Department of Mathematics \& Institute for Scientific Computation (ISC), Texas A\&M University,
College Station, Texas, USA (\texttt{efendiev@math.tamu.edu})}
\and
Maria Vasilyeva\thanks{Institute for Scientific Computation (ISC), Texas A\&M University, College Station, Texas, USA (\texttt{efendiev@math.tamu.edu})}
\and
 Mary Wheeler\thanks{ICES, University of Texas, Austin, TX, USA (\texttt{mfw@ices.utexas.edu})}
}

\maketitle

\begin{abstract}

In this paper, we develop a space-time upscaling framework that can
be used for many challenging porous media applications without
scale separation and high contrast. Our main focus is on
nonlinear differential equations with multiscale coefficients.
The framework is built on nonlinear nonlocal multi-continuum
upscaling concept \cite{NLMC} and significantly extends
the results in the proceeding paper \cite{chung2018nonlinear}.

Our approach starts with a coarse space-time partition and
identifies test functions for each partition, which play a role
of multi-continua. The test functions are defined via optimization
and play a crucial role in nonlinear upscaling. In the second stage,
we solve nonlinear
local problems in oversampled regions with some constraints
defined via test functions. These local solutions define a nonlinear
map from macroscopic variables determined with the help of test functions
to the fine-grid fields. This map can be thought as a downscaled map
from macroscopic variables to the fine-grid solution. In the final
stage, we seek macroscopic variables in the entire domain
 such that the downscaled field
solves the global problem in a weak sense defined using the test functions.
We present an analysis of our approach for an example nonlinear problem.

Our unified framework plays an important role
in designing various upscaled methods.
Because local problems are directly related to the fine-grid
problems, it simplifies the process of finding local solutions with
appropriate constraints \cite{NLMC}. Using machine learning (ML),
we identify the complex map from macroscopic variables to fine-grid
solution.
 We present numerical results for several porous media
applications, including two-phase flow and transport.

\end{abstract}

\section{Introduction}

Many porous media models are nonlinear and deriving these
nonlinear macroscopic equations rely on some assumptions. For example,
the well-known two-phase flow and transport model assumes that the
relative permeabilities are functions of local saturations \cite{BT}.
Similarly, for unsaturated flows, the nonlinear relations between
pressures and capillary curves use local relations. All these problems
have space-time heterogeneities. Some rigorous
upscaling tools are needed to generalize these models and understand
the errors associated in these macroscopic models. This is one of
our goals in this paper.

Many approaches are suggested for nonlinear upscaling in the past,
e.g., \cite{ab05, egw10,  arbogast02, GMsFEM13, AdaptiveGMsFEM, brown2014multiscale, ElasticGMsFEM, ee03, abdul_yun, ohl12, fish2004space, fish2008mathematical, oz07, matache2002two, apwy07, henning2009heterogeneous, OnlineStokes, chung2017DGstokes,WaveGMsFEM, pwy02, Arbogast_PWY_07, MsDG, fish1997computational,oskay2007eigendeformation,yuan2009multiple}.
For multi-phase flows, these techniques include
permeability or transmissibility
upscaling
\cite{dur91, weh02, cdgw03,hn00} for single-phase flow
and pseudo-relative permeability approach
\cite{cdgw03,KB,BT}. The pseudo-relative permeability approach
computes nonlinear relative permeability functions.
These nonlinear approaches are known to lack
robustness and are process dependent \cite{ed01,ed03}. To overcome
these difficulties, one needs a better understanding of nonlinear upscaling
methods for space-time heterogeneous problems.
Nonlinear upscaling methods for scale separation cases
are rigorously treated in
\cite{pankov97, ep03d}.
Among these approaches, some deal with problems that have both space
and time heterogeneities.

Our proposed approaches take their origin in the Constraint Energy Minimizing
Generalized Multiscale
Finite Element Method (GMsFEM) and Nonlocal Multi-Continua upscaling,
which are related. The main idea of these approaches is to use
multiple macroscopic parameters to represent the solution over each
coarse-grid block. We refer to these degrees of freedom as continua,
which are important for achieving a high order accuracy.
We note that generalized continua concepts are also introduced in
computational mechanics \cite{fafalis2012capability},
which include
generalized continuum theories (e.g., \cite{fafalis2012capability}),
computational continua
framework (e.g., \cite{fish2010computational}),
and other approaches.
Computational continua (\cite{fish2010computational,fafalis2018computational}), which use nonlocal quadrature to couple the coarse scale system stated on unions of some disjoint computational unit cells, are introduced for non-scale-separation heterogeneous media. In \cite{fish2012reduced,fish2015computational,fish2013practical}, the
computational continua with model reduction technique is combined.

An important step that connects multiscale methods and upscaling techniques
includes
using basis functions such that the resulting degrees of freedom
have physical meanings, typically averages of the solution.
For nonlinear problems, using linear basis functions is not very suitable.
The local problems are nonlinear problems. For this reason, in our first
work \cite{chung2018nonlinear}, we provided a framework for NLMC for stationary problems.
In this paper, we provide a unified framework for nonlinear NLMC for problems
with space-time heterogeneities, analysis, and machine learning based simplified local solves.

\begin{figure}
\centering
\includegraphics[width=6in]{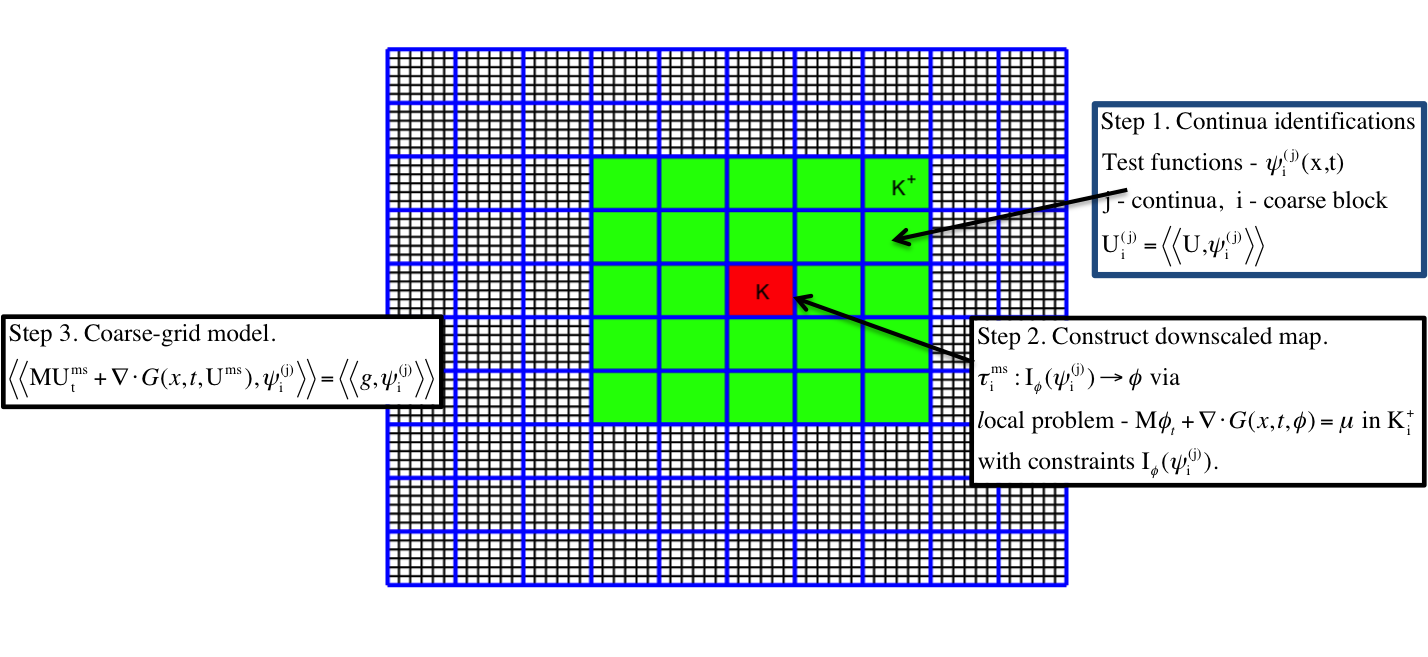}
\caption{Schematic description of the method.}
\label{fig:ill1}
\end{figure}

In Figure \ref{fig:ill1}, we illustrate the main steps of our approach.
Below, we briefly describe them. In the first step, we identify
continua in each coarse block. This is done with the help of
test functions, which can separate the features that can not be
localized within the region of influence (oversampling region designated
with green color in  Figure \ref{fig:ill1}). For nonlinear problems, each continua is defined by a
corresponding test function.
Continua play the role of macroscale variables. In
our examples, macroscale variables are average solution values in some selected heterogeneous
regions (such as channels).

In Step 2, once we identify the continua,
we use oversampling regions to define downscaling maps.
The oversampling region represents
the region of influence and thus, the macroscopic parameter interactions
are defined within oversampling regions.
 The local nonlinear
problems are formulated
in the oversampled regions using constraints. However, these
computations are expensive and require appropriate local
problems. Instead, we propose to use local space-time models
of the original PDEs and perform many tests with various boundary
conditions and sources. These local solutions are used to train macroscopic
parameters as a function of multiple macroscale continua variables. For
machine learning, we use deep learning algorithms, which allow approximating
complex multi-continua dependent functions.

In Step 3, we seek a coarse-grid solution (the values in each continua)
such that the downscaled global fine-scale solution satisfies
the variational formulation that uses the test functions defined in Step 1.
An example of test functions that we use is piecewise constant functions
in each subregions (defined as channels).
 Then, the macroscale variables are average solutions
defined in these subregions. The corresponding downscaled maps represent
the local fine-grid solutions given these constraints. The global
coarse-grid formulation can be thought as a mass balance equation formulated
for each continua.

The main contributions of this paper are the following:
\begin{itemize}

\item  Novel upscaled model for space-time;

\item Unified framework using test functions;

\item Easy local problems and machine learning calculations;

\item Numerical results that uses machine learning and nonlinear
upscaled models.

\end{itemize}

In the paper, we present an analysis of our approach for a model
problem, which consists of heterogeneous p-Laplacian ($p=2$). This model
problem requires nonlinear upscaling and some oversampling in order
to show an optimal convergence of our proposed approach.

In conclusion, the paper is organized as follows. In Section
\ref{sec:nlmc}, we give some preliminary results of the
nonlocal multicontinua approach. In Section \ref{sec:approach}, we present
our approach, which uses the space-time nonlocal multicontinua approach.
In this section, we present examples and convergence results.
The numerical results are presented in Section \ref{sec:numresults}.

\section{Overview of NLMC methods}\label{sec:overview}
\label{sec:nlmc}


In this section, we will give a brief overview of the NLMC method
for linear problems \cite{NLMC}.
Our goal is to summarize the key ideas  and motivate
our new space-time nonlinear NLMC method.
We consider
a model elliptic equation with a heterogeneous coefficient
\begin{equation}\label{eq:parabolic}
- \nabla\cdot (\kappa \nabla u) = f, \quad \text{ in } \Omega.
\end{equation}
Here $\kappa$ is the heterogeneous field,
$f$ is a given source and
$\Omega$ is the physical domain.

The NLMC method is defined on a coarse mesh,
$\mathcal{T}^H$, of the domain $\Omega$.
We write $\mathcal{T}^H = \bigcup \{ K_i \; | \; i=1,\cdots, N\}$, where $K_i$ denotes the $i$-th coarse element
and $N$ denotes the number of coarse elements in $\mathcal{T}^H$.
For each coarse element $K_i$,
we define an oversampled region $K_i^+$,
which is obtained by enlarging
the coarse block $K_i$ by a few coarse grid layers. We will also denote $K_i^+ = K_{i,l}$
when the oversampling region is obtained by enlarging $K_i$ by $l$ coarse grid layers.
See Figure~\ref{grid} for an illustration of coarse grid and
oversample region.
In particular, a structured coarse grid is shown with boundaries
of coarse elements are denoted red.
A coarse cell $K_i$ is denoted green and its oversampled region $K_i^+$ obtained by
enlarging $K_i$ by one coarse grid layer
is enclosed by black lines.

\begin{figure}
\centering
\includegraphics[width=6cm]{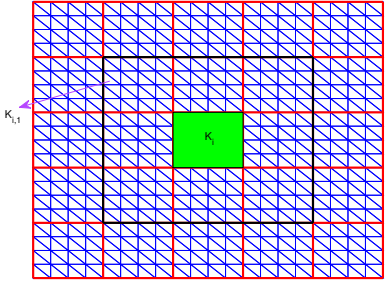}
\caption{Schematic of the coarse grid $K_i$, the oversampling region $K_{i,1}$ and the fine grids.}
\label{grid}
\end{figure}

The NLMC method consists of three main ingredients:
\begin{enumerate}
\item Choice of continua.
\item Local basis functions.
\item Global coupling.
\end{enumerate}

For each coarse element $K_i$, we will identify multiple continua corresponding to various solution features. This can be done
via a local spectral problem or a suitable weight function.
Using the definition of continua, we will define a set of local basis functions by solving some local problems on oversample regions.
Then, the final NLMC system is defined using these multiscale basis functions and a suitable variational formulation.
In the following, we will discuss these concepts in detail.

Now we will specify the definition of continuum that is used in our studies.
 For each coarse block $K_i$,
we will identify a set of continua which are represented by a set of
auxiliary basis functions $\phi^{j}_i$,
where $j$ denotes the $j$-th continuum.
There are multiple ways to construct these functions $\phi^{j}_i$.

One way is to
follow the idea proposed in CEM-GMsFEM \cite{chung2018constraint}.
In this framework, the auxiliary basis functions $\phi^{j}_i$
are obtained as the dominant eigenfunctions of a local spectral problem defined on $K_i$.
These eigenfunctions can capture the heterogeneities and the
contrast of the medium.
We can also follow the framework in the original
NLMC method \cite{NLMC}, designed for flows in fractured media,
which can be easily modified for general heterogeneous media.
In this approach, one identifies explicit information of fracture networks.
The auxiliary basis functions $\phi^{j}_i$ are piecewise constant
functions, namely, they equal one within one fracture network
and zero otherwise.
Moreover, one can define the continua
by using properties of the heterogeneous media.
In this case, the auxiliary basis functions are piecewise constant functions defined with respect to a partition of
the coarse cell $K_i$, such as the medium coefficients have a bounded contrast in each subregion \cite{zhao2019analysis}.



Once the auxiliary basis functions $\phi^{j}_i$ are specified,
we can construct the required basis functions.
The idea generalizes the original energy minimization framework in CEM-GMsFEM.
First, we denote the space of auxiliary basis functions as $V_{aux}$.
Consider a given coarse element $K_i$ and a given continuum $j$ within $K_i$.
We will use the corresponding auxiliary basis function $\phi^{j}_i$
to construct our required multiscale basis function $\psi^{j}_i$
by solving a problem in an oversampled region $K_i^+$.
Specifically, we find $\psi^{j}_i \in H^1_0(K_i^+)$ and $\mu \in V_{aux}$
such that
\begin{equation}\label{eq:basis}
\begin{aligned}
& \int_{K_i^+} \kappa \nabla \psi^{j}_i \cdot \nabla v + \int_{K_i^+} \tilde{\kappa} \mu v = 0, \quad \forall v\in H^1_0(K_i^+), \\
& \int_{K_\ell} \tilde{\kappa} \psi^{j}_{i}\phi_m^{\ell}   = \delta_{j\ell} \delta_{im}, \quad \forall K_\ell \subset K_i^+,
\end{aligned}
\end{equation}
where $\delta_{i m}$ denotes the standard delta function and $\tilde{\kappa}$ is a weight function.
We remark  the function $\mu$ serves as a Lagrange multiplier for the constraints in the second equation of (\ref{eq:basis}).
We also remark that the basis function $\psi^{j}_i$ has mean value one on the $j$-th continuum within $K_i$
and has mean value zero in all other continua in all coarse elements within $K_i^+$.
In practice, the above system (\ref{eq:basis}) is solved in $K_i^+$ using a fine mesh, which is typically a refinement
of the coarse grid. See Figure~\ref{grid} for an illustration.

Finally, we can derive the NLMC system.
Let $V_{ms}$ be the space spanned by the basis functions $\{ \psi^j_i\}$.
We will represent the approximate solution $u_{ms} \in V_{ms}$ as a linear combination of basis functions, namely,
$$
u_{ms} = \sum_{i=1}^N \sum_j U_i^j \psi^j_i.
$$
Then, we will find $u_{ms}$ by the following variational formulation
\begin{equation*}
a(u_{ms}, \psi) = (g,\psi), \quad \forall \psi \in V_{ms}.
\end{equation*}
This variational formulation results in the following upscaled model for the solution $U = ( U^j_i)$:
\begin{equation*}
A_T U = F
\end{equation*}
where
the upscaled stiffness matrix $A_T$ is defined as
\begin{equation}\label{eq:Trans1}
(A_T)_{jm}^{(i,\ell)} = a(\psi_j^{i} ,\psi_m^{\ell} ) := \int_{\Omega} \kappa \nabla \psi^{i}_j \cdot \nabla \psi_m^{\ell},
\end{equation}
and the upscaled source term $F$ is defined as
\begin{equation*}
(F)^{(j)}_i = (g, \psi^j_i).
\end{equation*}
We remark that the nonlocal connections of the continua are coupled by the matrix $A_T$.
We also remark that the local computation in (\ref{eq:basis})
results from a spatial decay property of the multiscale basis function, see \cite{chung2018constraint,chung2018fast,chung2018constraintmixed}
for the theoretical foundation.

The above NLMC idea can be extended to nonlinear elliptic problems, resulting in a
nonlinear NLMC method (\ref{eq:global1})-(\ref{eq:global2}). See Section \ref{sec:non-nlmc}
for the derivation and the convergence analysis.

\section{Nonlinear non-local multicontinua model}
\label{sec:approach}

In this section, we present the nonlinear non-local multicontinua (NLMC) method. We will first
give some general concept of the methodology in Section \ref{sec:concept}. Then, in Section \ref{sec:example}, we
give some illustrative examples including linear problems and pseudomonotone problems.
The main methodological details of the method are presented in Section \ref{sec:detail}.
Finally, we present a convergence analysis of the method for a model elliptic problem in Section \ref{sec:non-nlmc}.

\subsection{General concept}
\label{sec:concept}
We will first present some general concepts of our nonlinear NLMC using the following model nonlinear problem
\begin{equation}
\label{eq:non1}
M U_t + \nabla \cdot G(x, t, U)=g,
\end{equation}
where
$G$ is a nonlinear operator that has a multiscale dependence with respect to
space (and time, in general) and $M$ is a linear operator.
In the above equation, $U$ is the solution and $g$ is a given source term.
Our method has three key ingredients, namely, the choice of continua, the construction of local downscaling map
and the construction of the coarse scale model. We will
summarize these concepts in the following.

\begin{itemize}

\item {\bf The choice of continua}

The continua serve as our macroscopic variables in each coarse element. Our approach uses
a set of test functions to define the continua.
To be more specific, we consider a coarse element $K_i$.
We will choose a set of test functions $\{ \psi_i^{(j)}(x,t) \}$ to define our continua,
where $j$ denotes the $j$-th continuum.  Using these test functions, we can define
our macroscopic variables as
$$
U_i^{(j)} = \langle \langle U, \psi_i^{(j)} \rangle\rangle
$$
where $\langle \langle \cdot,\cdot \rangle\rangle$ is a space-time inner product.


\item {\bf The construction of local downscaling map}

Our upscale model uses a local downscaling map to bring microscopic information to the coarse grid model.
The proposed downscaling map is a function defined on an oversampling region subject to some constraints
related to the macroscopic variables.
In time-dependent problems, the oversampling region can be regarded as a zone
of influence for coarse-grid variables defined on the target coarse block
$K_i$.
More precisely, we consider a coarse element $K_i$,
and an oversampling region $K_i^+$ such that $K_i \subset K_i^+$. Then we
find a function $\phi$ by
solving the following local problem
\begin{equation}
\label{eq:non11}
M \phi_t + \nabla \cdot G(x,t, \phi)=\mu, \quad\text{in } K_i^+.
\end{equation}
The above equation (\ref{eq:non11}) is solved subjected to constraints defined by the following functionals
\[
I_\phi(\psi_i^{(j)}(x,t)).
\]
This constraint fixes some averages of $\phi$ with respect to
$\psi_i^{(j)}(x,t)$.
We remark that the function $\mu$ serves as the Lagrange multiplier for the above constraints. This local solution
 builds a downscaling map
\[
\mathcal{F}_i^{ms}:I_\phi(\psi_i^{(j)}(x,t))\rightarrow \phi.
\]

\item  {\bf The construction of coarse scale model}

We will construct the coarse scale model using the test functions $\{ \psi_i^{(j)}(x,t) \}$
and the local downscaling map. Our upscaling solution $U^{ms}$ is defined
as a combination of the local downscaling maps.
To compute $U^{ms}$, we use the following variational formulation
\begin{equation}
\label{eq:nonlinearNLMC}
\langle  \langle M U^{ms}_t + \nabla \cdot G(x,t, U^{ms}),\psi_i^{(j)} \rangle \rangle= \langle  \langle g,\psi_i^{(j)} \rangle \rangle.
\end{equation}
The above equation (\ref{eq:nonlinearNLMC}) is our coarse scale model.

\end{itemize}

We would like to briefly summarize above steps. The first step defines
multicontinua, which play the role of macroscopic variables. They are
critical in multiscale modeling and need to be defined apriori.
The second step constructs downscaling maps and can be
computationally intensive. We will propose a machine learning
technique in combination with solving local problems of the original
equation subject to various boundary conditions. From here, the macroscale
fluxes will be defined as a function of macroscopic variables
in oversampled regions. This high dimensional functions will be learned
using machine learning techniques during coarse-grid solution step
(Step 3).
Next, we will give some examples (Section \ref{sec:example}) and then present a more detailed description of the algorithm (Section \ref{sec:detail}).

\subsection{Examples}
\label{sec:example}

We will present two model problems, and discuss how our nonlinear NLMC is applied.

\subsubsection{Linear case}

In this section, we will construct our upscaling model for
a case that $G$ is a linear operator.
We will follow the general concepts in Section \ref{sec:concept}.
First, we discuss the choice of continua. For each coarse element $K_i$, we consider a set of
test functions $\{ \psi^{(j)}_i(x,t)\}$ defined for $x\in K_i$. Here the index $j$ denotes the $j$-th continuum.
One choice of these test functions is a set of piecewise constant functions. Another choice of these test functions
is the first $j$ dominant eigenfunctions of an appropriate spectral problem.

Next, we discuss the construction of the local downscaling map.
We fix a continuum $\psi^{(j)}_i(x,t)$ in the coarse region $K_i$.
Let $K_i^+$ be an oversampling region.
With the assumption that $G$ is linear, we can represent the downscaling map, denoted by $\phi^{(j)}_i$,  
as a linear combination of some generic local solutions $\{ \phi^{(j,l)}_{i,m}\}$.
To find these functions $\{ \phi^{(j,l)}_{i,m}\}$, we solve the following
\begin{equation}
\begin{split}
M(\phi^{(j,l)}_{i,m})_t +\nabla \cdot G(x,t,\phi^{(j,l)}_{i,m}) = \mu^{(j,l)}_{i,m} \\
\langle  \langle \phi^{(j,l)}_{i,m},\psi_s^{(r)} \rangle \rangle= \delta_{lr}\delta_{ms}
\end{split}
\end{equation}
on the oversample region $K_i^+$, where $\langle \langle \cdot,\cdot \rangle \rangle$ is an inner product
and $ \mu^{(j,l)}_{i,m}$ plays the role of Lagrange multiplier. Using these functions $\{ \phi^{(j,l)}_{i,m}\}$,
we can represent the local downscaling map $\psi^{(j)}_i(x,t)$ as
$$
\psi^{(j)}_i(x,t) = \sum_{m,l} U^{(l)}_m \phi^{(j,l)}_{i,m}.
$$
Since $G$ is linear, we have
\[
G(x,t,\sum_{m,l} U^{(l)}_m\phi^{(j,l)}_{i,m})= \sum_{m,l} U^{(l)}_m G(x,t,\phi^{(j,l)}_{i,m}).
\]
Let $\{ \chi_i\}$ be a set of partition of unity functions corresponding to the partition $\{ K_i^+\}$ of the domain $\Omega$.
The final upscale solution is then defined as the combination $\phi := \sum_i \sum_j \chi_i \phi_i^{(j)}$.
Using the test functions $\psi_i^{(j)}$, we can compute the macroscopic value $\{ U_m^{(l)}\}$ by
the following variational formulation 
\begin{equation}
\langle \langle M\phi_t +\nabla \cdot G(x,t,\phi),  \psi^{(j)}_i \rangle \rangle = \langle \langle g, \psi_i^{(j)} \rangle \rangle,
\quad \forall \psi_i^{(j)}.
\end{equation}



\subsubsection{Pseudomonotone case}


Next, we consider another example for which $G$ is a pseudo-monotone operator. In this case, to compute the downscaling map, $\mathcal{F}^{ms}$, we will need to solve the following local problem: find $\mathcal{F}^{ms}(U)$ and $\mu$ such that
\begin{equation}
\begin{split}
M(\mathcal{F}^{ms}(U))_t +\nabla \cdot G(x,t,\mathcal{F}^{ms}(U)) = \mu\\
\langle  \langle \mathcal{F}^{ms}(U),\psi_i^{(j)} \rangle \rangle= U^{(j)}_i
\end{split}
\end{equation}

The coarse grid system is then defined as
\begin{equation}
\sum_{l,m} \langle  \langle M(\mathcal{F}^{ms}(U))_t  +\nabla \cdot G(x,t,\mathcal{F}^{ms}(U)),\psi_i^{(j)} \rangle  \rangle = \langle  \langle g,\psi_i^{(j)}(x,t) \rangle \rangle \;\forall \psi_i^{(j)}.
\end{equation}

\subsection{More details of general framework}
\label{sec:detail}

In this section, we give the details of our nonlinear NLMC framework.
We consider the following model problem of finding $u\in V$ such that
\[
\partial_{t}u+L(u)=f, \quad\text{in } \Omega \times (0,T]
\]
 with $u(\cdot,0)=0$, where $L$ is a nonlinear differential operator, $T>0$ is a fixed time and $V$ is a suitable function space.
We use a different notation for nonlinear differential operator as in (\ref{eq:non1}) to simplify the notations,
and our methodology remains applicable to the problem described by (\ref{eq:non1}).

Next, we discuss the mesh.
We assume that $\Omega$ is partitioned by a coarse mesh $\mathcal{T}_{H}$ (see Figure \ref{grid}) with mesh size $H>0$
and $(0,T]$ is partitioned into coarse time intervals denoted as $\mathcal{T}_{T}=\{(t_{i},t_{i+1}]\}$.
A space-time element $K^{(n,i)}$ is then defined by $K_i \times (t_{n},t_{n+1}]  $
for a coarse cell $K_{i}\in\mathcal{T}_{H}$ and the $n$-th time interval $(t_n, t_{n+1}]$.
The construction of our nonlinear NLMC method follows the three steps explained in Section \ref{sec:concept}.

{\bf Approximation by global basis}

The discussion of our method starts with the use of global basis functions. In this case, the basis functions
are global in space and in time. The motivation of this follows from the global basis of CEM-GMsFEM \cite{chung2018constraint},
for which coarse grid convergence is obtained.

\begin{itemize}

\item {\bf Choice of continua}

The continua is defined using a set of test functions. Consider a space-time element $K^{(n,i)}$,
we will introduce a set of test functions $V_{aux} = \{ \psi_{j}^{(n,i)} \}$ which
corresponding to different continua of the problem.
We notice that
$\psi_{j}^{(n,i)}$ is supported in $K^{(n,i)}$.
We let $N_c$ be the number of such test functions.
Then
we will define macroscopic variables
by
\[
U_j^{(n,i)}=s(u,\psi_{j}^{(n,i)})=\int_{0}^{T}\int_{\Omega}\tilde{\kappa}\psi_{j}^{(n,j)}u
\]
where
$s(\cdot,\cdot)$ is a weighted $L^{2}$ inner product with
weighting function $\tilde{\kappa}$ such that $c_{0}H^{-1}\leq\tilde{\kappa}\leq c_{0}H^{-1}$.
Note that this condition for the weighting function is motivated by the weighting function used in CEM-GMsFEM.

\item {\bf Global downscaling map}

We will define a downscaling map. This downscaling map will give a function defined globally in space and in time
with constraints defined using a given set of macroscopic values. More precisely, we fix a set of macroscopic values $\{ U_j^{(n,i)}\}$.
We will then define a function $F = (F_1, F_2)$
such that $F_1\in V$ and $F_2 \in V_{aux}$. These functions are obtained by solving
\begin{align*}
\int_{0}^{T}\int_{\Omega}(\partial_{t}+L)F_{1}(U)v-s(F_{2}(U),v) & =0,\quad\forall v\in V, \\
s(F_{1}(U),\psi_{j}^{(n,i)}) & =U_{j}^{(n,i)}, \quad\forall\psi_{j}^{(n,i)} \in V_{aux}.
\end{align*}
We notice that the global function $F_1$ has macroscopic values equal to the given values $\{ U_j^{(n,i)}\}$
and the function $F_2$ serves as the Lagrange multiplier for these constraints.

\item {\bf Coarse grid model}

Next, using the downscaling map,
we can define the
global coarse grid problem as: finding $U\in\mathbb{R}^{N_c}$ such
that
\[
s(F_{2}(U),\psi_{j}^{(n,i)})=\int_{0}^{T}\int_{\Omega}f\psi_{j}^{(n,i)}, \quad \forall\psi_{j}^{(n,i)} \in V_{aux}.
\]
Then,
the global numerical solution $u_{glo}$ is defined by $u_{glo}=F_{1}(U)$.

\end{itemize}

{\bf Nonlinear NLMC method}

Now we will present the nonlinear NLMC method. The key ingredient is that we will replace
the global downscaling map above by a local downscaling map.

\begin{itemize}

\item {\bf Local downscaling map}

We will introduce the localized downscaling operator
$F_{ms} = (F_{ms,1},F_{ms,2})$. Consider a space-time element $K^{(n,i)}$.
We define a space-time oversampling region $K_+^{(n,i)} = K_i^+ \times (t_{n}^{-},t_{n+1}]$
where $t_n^- < t_n$.
We will then define a function $F_{loc}^{(n,i)} = (F_{loc,1}^{(n,i)}, F_{loc,2}^{(n,i)})$
such that $F_{loc,1}^{(n,i)} \in V(K_+^{(n,i)})$ and $F_{loc,2}^{(n,i)} \in V_{aux}(K_+^{(n,i)})$,
where $ V(K_+^{(n,i)})$ and $V_{aux}(K_+^{(n,i)})$ are restrictions of $V$ and $V_{aux}$ on $K_+^{(n,i)}$ respectively.
These functions are obtained by solving
\begin{align*}
\int_{t_{n}^{-}}^{t_{n+1}}\int_{K_{+}^{i}}(\partial_{t}+L)F_{loc,1}^{(n,i)}(U)v-s(F_{loc,2}^{(n,i)}(U),v) & =0, \quad\forall v\in V(K_{+}^{(n,i)}), \\
s(F_{loc,1}^{(n,i)}(U),\psi_{j}^{(n,i)}) & =U_{j}^{(n,i)}, \quad\forall\psi_{j}^{(n,i)}\in V_{aux}(K_{+}^{(n,i)}).
\end{align*}
Finally the localized downscale
operator is defined by $F_{ms,p}(U)=\sum_{n,l}\chi^{(n,i)}F_{loc,p}^{(n,i)}(U)$
where $p=1,2$ and
$\chi^{(n,i)}$ is a partition of unity such that $\sum_{n,i}\chi^{(n,i)}\equiv1.$

\item {\bf Coarse grid model}

The coarse grid problem is then defined as: finding $U\in\mathbb{R}^{N_c}$
such that

\[
s(F_{ms,2}(U),\psi_{j}^{(n,i)})=\int_{0}^{T}\int_{\Omega}f\psi_{j}^{(n,i)}, \quad\forall\psi_{j}^{(n,i)}\in V_{aux}
\]
and the nonlinear NLMC solution $u_{ms}$ is defined by $u_{ms}=F_{ms,1}(U)$.

\end{itemize}

\subsection{Error sources and analysis}
\label{sec:non-nlmc}
In this section, we present a concept of the analysis for the method.
We will use a simple monotone elliptic equation to illustrate the main ideas.
We consider the following problem: find $u$ such that
\begin{equation}
\label{eq:monotonePDE}
\begin{aligned}
\nabla\cdot(\kappa(x,\nabla u)) & =f,  \quad &&\text{in } \Omega, \\
u & =0,  \quad &&\text{on } \partial\Omega,
\end{aligned}
\end{equation}
where $\kappa(x,v)$ is a heterogeneous function.
The weak formulation
of the above equation can be written as: find $u\in V=H_{0}^{1}(\Omega)$
such that
\begin{align*}
A_{\Omega}(u,w) & =\int_{\Omega}fw, \quad \forall w\in H_{0}^{1}(\Omega),
\end{align*}
 where, for any open subset $\omega\subset \Omega$ of the domain,
  the operator $A_{\omega}$ is defined by
\[
A_{\omega}(u,w)=\int_{\omega}\kappa(x,\nabla u)\cdot\nabla w.
\]
We will assume that the heterogeneous function $\kappa(x,v)$ satisfies the following two properties.

\noindent
{\bf Assumption on $\kappa(x,v)$}

\begin{enumerate}
\item If the vector field $v = 0$, then $\kappa(x,v)=0$.

\item Lipschitz continuity with respect to $v$:

We assume there exist a function $\overline{\kappa}\in L^{\infty}(\Omega)$ such that
\begin{equation}
|\kappa(x,z)-\kappa(x,v)|\leq C_{1} \, \overline{\kappa}(x)|z-v|.
\label{eq:upper_bound}
\end{equation}

\item Monotonicity:

We assume that the following coercivity condition holds
\begin{equation}
\kappa(x,v)\cdot v\geq C_{2}\,\overline{\kappa}(x)|v|^{2}.
\label{eq:lower_bound}
\end{equation}

\end{enumerate}

Next,  for any open subset $\omega\subset \Omega$ of the domain,
we define two inner products $a_{\omega}(\cdot,\cdot)$ and $s_{\omega}(\cdot,\cdot)$
as follows
\[
a_{\omega}(u,w)=\int_{\omega}\overline{\kappa}\nabla u\cdot\nabla w \quad \text{and }\quad s_{\omega}(u,w)=\int_{\omega}\tilde{\kappa}uw
\]
where $\tilde{\kappa}(x) =\overline{\kappa}\sum_{i}|\nabla\chi_{i}|^{2}$ and $\{\chi_{i}\}_{i=1}^{N}$ is a set of partition of unity functions corresponding to the coarse mesh such that
$0\leq\chi_{i}\leq1$.
The norms $\|\cdot\|_{a(\omega)}$ and $\|\cdot\|_{s(\omega)}$ corresponding
to these inner products are defined as
\[
\|u\|_{a(\omega)}^{2}=a_{\omega}(u,u) \quad\text{and}\quad \|u\|_{s(\omega)}^{2}=s_{\omega}(u,u)
\]
respectively.
To simplify the notation, we use $A$, $\|\cdot\|_{a}$ and $\|\cdot\|_{s}$ to denote
$A_{\Omega}$, $\|\cdot\|_{a(\Omega)}$ and  $\|\cdot\|_{s(\Omega)}$ respectively.

In the following Lemma, we will show that the operator $A_{\omega}$
satisfies some coercivity and continuity properties.

\begin{lemma}
\label{lem:coercive}
For $\omega\subset \Omega$, $u,v,w\in H^{1}(\omega)$, we have
\[
A_{\omega}(u,u)\geq C_{2}\|u\|_{a(\omega)}^{2}
\]
and
\[
\Big|A_{\omega}(u,w)-A_{\omega}(v,w)\Big|\leq C_{1}\|u-v\|_{a(\omega)}\|w\|_{a(\omega)}.
\]
Moreover, we have
\[
\Big|A_{\omega}(u,w)\Big|\leq C_{1}\|u\|_{a(\omega)}\|w\|_{a(\omega)}.
\]
\end{lemma}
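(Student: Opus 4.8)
The plan is to establish all three estimates by reducing each to a pointwise application of the assumptions on $\kappa(x,v)$ followed by a weighted Cauchy--Schwarz inequality. The underlying observation is that the $a$-norm is exactly the weighted $L^{2}$ norm of the gradient with weight $\overline{\kappa}$, so the structural hypotheses on $\kappa$ translate directly into the claimed bounds. Coercivity is immediate: setting $v=\nabla u$ in the monotonicity assumption (\ref{eq:lower_bound}) and integrating over $\omega$ gives
\[
A_{\omega}(u,u)=\int_{\omega}\kappa(x,\nabla u)\cdot\nabla u\geq C_{2}\int_{\omega}\overline{\kappa}\,|\nabla u|^{2}=C_{2}\|u\|_{a(\omega)}^{2},
\]
which is the first claim with no further work.

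For the Lipschitz (strong monotonicity) estimate I would write the difference as a single integral and apply the pointwise Cauchy--Schwarz inequality to the dot product, obtaining
\[
\Big|A_{\omega}(u,w)-A_{\omega}(v,w)\Big|\leq\int_{\omega}\big|\kappa(x,\nabla u)-\kappa(x,\nabla v)\big|\,|\nabla w|.
\]
The Lipschitz hypothesis (\ref{eq:upper_bound}) then bounds the integrand by $C_{1}\overline{\kappa}\,|\nabla(u-v)|\,|\nabla w|$. Applying the Cauchy--Schwarz inequality to the resulting weighted integral, with $\overline{\kappa}$ split symmetrically as $\overline{\kappa}^{1/2}\cdot\overline{\kappa}^{1/2}$, reassembles the two factors into $C_{1}\|u-v\|_{a(\omega)}\|w\|_{a(\omega)}$.

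The boundedness estimate is then a corollary of the previous step together with the first assumption: since $\kappa(x,0)=0$ we have $A_{\omega}(0,w)=0$, so setting $v=0$ in the Lipschitz estimate immediately yields $|A_{\omega}(u,w)|\leq C_{1}\|u\|_{a(\omega)}\|w\|_{a(\omega)}$. I do not anticipate a genuine obstacle; the content of the lemma is simply that the weighted $a$-inner product is the natural energy norm in which the hypotheses on $\kappa$ become coercivity and continuity. The only point requiring mild care is the weighted Cauchy--Schwarz step, where one must factor $\overline{\kappa}$ symmetrically so that each factor reassembles into the correct $a$-norm, which is routine once the weight is recognized as the defining weight of $\|\cdot\|_{a(\omega)}$.
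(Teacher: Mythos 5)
Your proposal is correct and follows essentially the same route as the paper's proof: coercivity from the monotonicity assumption (\ref{eq:lower_bound}), the continuity bound from the Lipschitz assumption (\ref{eq:upper_bound}) combined with weighted Cauchy--Schwarz, and the third estimate obtained by setting $v=0$ and using $\kappa(x,0)=0$. No gaps; the only minor quibble is terminological, since the second bound is a Lipschitz continuity estimate rather than a strong monotonicity statement.
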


\begin{proof}
By the assumption (\ref{eq:lower_bound}), we have
\[
A_{\omega}(u,u)=\int_{\omega}\kappa(x,\nabla u)\cdot\nabla u\geq C_{2}\int_{\omega}\overline{\kappa}(x)|\nabla u|^{2}=C_{2}\|u\|_{a(\omega)}^{2}.
\]
which proves the first inequality.
By the assumption (\ref{eq:upper_bound}), we have
\begin{align*}
\Big|A_{\omega}(u,w)-A_{\omega}(v,w)\Big| & =\Big|\int_{\omega}\Big(\kappa(x,\nabla u)-\kappa(x,\nabla v)\Big)\cdot\nabla w\Big|\leq\int_{\omega}\big|\kappa(x,\nabla u)-\kappa(x,\nabla v)\big|\cdot\big|\nabla w\big|\\
 & \leq C_{1}\int_{\omega}\overline{\kappa}\big|\nabla(u-v)\big|\cdot\big|\nabla w\big|\leq C_{1}\|u-v\|_{a(\omega)}\|w\|_{a(\omega)}
\end{align*}
which gives the second inequality.
Recall that $\kappa(x,0)=0$. Thus we have
\[
\Big|A_{\omega}(u,w)\Big|=\Big|A_{\omega}(u,w)-A_{\omega}(0,w)\Big|\leq C_{1}\|u\|_{a(\omega)}\|w\|_{a(\omega)}
\]
which shows the third inequality. This completes the proof of this lemma.
\end{proof}

We next prove the following technical result.

\begin{lemma}
For $\omega\subset \Omega$, $u,v\in H^{1}(\omega)$, we have
\[
\Big|A_{\omega}(u,v)\Big|=\Big|\int_{\omega}\kappa(x,\nabla u)\cdot\nabla(v\chi_{i})\Big|\leq C_{1}\|u\|_{a(\omega)} \Big(\|v\|_{a(\omega)}+\|v\|_{s(\omega)} \Big).
\]
\end{lemma}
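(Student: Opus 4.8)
The plan is to estimate the integral $\int_{\omega}\kappa(x,\nabla u)\cdot\nabla(v\chi_{i})$ directly, exploiting the product structure of $v\chi_{i}$ together with the two structural assumptions on $\kappa$. First I would apply the product rule, $\nabla(v\chi_{i})=\chi_{i}\nabla v+v\nabla\chi_{i}$, which splits the integral into two pieces. The first carries the gradient $\nabla v$ and will be charged to the energy norm $\|v\|_{a(\omega)}$; the second carries $v$ itself against $\nabla\chi_{i}$ and will be charged to the weighted norm $\|v\|_{s(\omega)}$. The proof then reduces to bounding these two pieces separately and recombining them. Before estimating, I would record the pointwise bound $|\kappa(x,\nabla u)|\leq C_{1}\overline{\kappa}(x)|\nabla u|$, which follows from the Lipschitz assumption (\ref{eq:upper_bound}) with $z=\nabla u$, $v=0$ together with the normalization $\kappa(x,0)=0$; this is exactly the estimate already used to obtain the third inequality of Lemma~\ref{lem:coercive}, so I would simply invoke it.

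For the first piece I would use $0\leq\chi_{i}\leq1$ to discard the factor $\chi_{i}$, then apply the weighted Cauchy--Schwarz inequality with weight $\overline{\kappa}$, writing the integrand as $\big(\overline{\kappa}^{1/2}|\nabla u|\big)\big(\overline{\kappa}^{1/2}|\nabla v|\big)$, which yields the bound $C_{1}\|u\|_{a(\omega)}\|v\|_{a(\omega)}$. For the second piece, the key is the definition $\tilde{\kappa}=\overline{\kappa}\sum_{i}|\nabla\chi_{i}|^{2}$, which was tailored precisely so that $\overline{\kappa}|\nabla\chi_{i}|^{2}\leq\tilde{\kappa}$ pointwise. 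I would write the integrand as $\big(\overline{\kappa}^{1/2}|\nabla u|\big)\big(\overline{\kappa}^{1/2}|\nabla\chi_{i}|\,|v|\big)$ and apply Cauchy--Schwarz: the first factor integrates to $\|u\|_{a(\omega)}^{2}$, while the second integrates to $\int_{\omega}\overline{\kappa}|\nabla\chi_{i}|^{2}v^{2}\leq\int_{\omega}\tilde{\kappa}v^{2}=\|v\|_{s(\omega)}^{2}$, producing $C_{1}\|u\|_{a(\omega)}\|v\|_{s(\omega)}$. Adding the two contributions gives $C_{1}\|u\|_{a(\omega)}\big(\|v\|_{a(\omega)}+\|v\|_{s(\omega)}\big)$, which is the claimed estimate.

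The main obstacle — indeed essentially the only conceptual point — is the treatment of the second piece: one must recognize that the partition-of-unity weight $\tilde{\kappa}$ is defined exactly to absorb $\overline{\kappa}|\nabla\chi_{i}|^{2}$, so that the term involving $\nabla\chi_{i}$ can be controlled by the $\|\cdot\|_{s(\omega)}$ norm rather than by $\|\cdot\|_{a(\omega)}$. This is the standard mechanism by which oversampling-based energy-minimization methods handle the cutoff functions, and once it is in place, everything else is a routine application of Cauchy--Schwarz and the structural hypotheses on $\kappa$.
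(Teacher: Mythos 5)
Your proposal is correct and follows essentially the same route as the paper's own proof: the same product-rule splitting $\nabla(v\chi_i)=\chi_i\nabla v+v\nabla\chi_i$, the same pointwise consequence of the Lipschitz bound and $\kappa(x,0)=0$, weighted Cauchy--Schwarz on each piece, and the absorption of $\overline{\kappa}|\nabla\chi_i|^2$ into $\tilde{\kappa}$ to produce the $\|v\|_{s(\omega)}$ term. The only cosmetic difference is that you state the pointwise bound $|\kappa(x,\nabla u)|\leq C_1\overline{\kappa}|\nabla u|$ up front, while the paper invokes its squared form $\overline{\kappa}^{-1}|\kappa(x,\nabla u)|^2\leq C_1^2\overline{\kappa}|\nabla u|^2$ inside the Cauchy--Schwarz step.
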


\begin{proof}
Notice that $\nabla(\chi_{i}v)=v\nabla\chi_{i}+\chi_{i}\nabla v$. Thus, we have
\[
\int_{\omega}\kappa(x,\nabla u)\cdot\nabla(v\chi_{i})=\int_{\omega}v\kappa(x,\nabla u)\cdot\nabla\chi_{i}+\int_{\omega}\chi_{i}\kappa(x,\nabla u) \cdot \nabla v.
\]
We will first estimate the term $\int_{\omega}v\kappa(x,\nabla u)\cdot\nabla\chi_{i}$.
By the Cauchy-Schwarz inequality, we have
\[
|\int_{\omega}v\kappa(x,\nabla u)\cdot\nabla\chi_{i}|\leq\Big(\int_{\omega}\overline{\kappa}^{-1}|\kappa(x,\nabla u)|^{2}\Big)^{\frac{1}{2}}\Big(\int_{\omega}\overline{\kappa}|\nabla\chi_i|^{2}|v|^{2}\Big)^{\frac{1}{2}}
\]
and, by using (\ref{eq:upper_bound}), we have
\[
\overline{\kappa}^{-1}|\kappa(x,\nabla u)|^{2}\leq C_{1}^{2}\overline{\kappa}|\nabla u|^{2}.
\]
Combining the above, we have
\[
|\int_{\omega}v \kappa(x,\nabla u)\cdot\nabla\chi_i  |\leq C_{1}\|u\|_{a(\omega)}\|v\|_{s(\omega)}.
\]
To estimate the second term $\int_{\omega}\chi_{i}\kappa(x,\nabla u)\nabla v$,
we use the fact that $|\chi_i| \leq 1$ and assumption  (\ref{eq:upper_bound}) to obtain
\begin{align*}
\Big|\int_{\omega}\chi_{i}\kappa(x,\nabla u)\nabla v\Big| & \leq\int_{\omega}|\kappa(x,\nabla u)||\nabla v|\leq C_{1}\int_{\omega}\overline{\kappa}|\nabla u||\nabla v|
  \leq C_{1}\|u\|_{a(\omega)}\|v\|_{a(\omega)}.
\end{align*}
This completes the proof of this lemma.
\end{proof}

In the following, we will formulate our nonlinear NLMC method for the equation (\ref{eq:monotonePDE}).
The coarse scale degrees of freedom (continua) $\overline{U}$
of the solution is defined as
\[
\overline{U}_{i,j}=\int_{\Omega}\tilde{\kappa}u\mu_{i,j}
\]
 for some $\mu_{i,j}\in L^{\infty}(\Omega)$ where $\mu_{i,j}|_{K_{m}}=0$ if $m\neq i$. The auxiliary space $V_{aux}$
is then defined as
\[
V_{aux}=\text{span}_{i,j}\{\mu_{i,j}\}
\]
We remark that the functions in $V_{aux}$ defines the continua. In particular, $\mu_{i,j}$
defines the $j$-th continuum in the coarse cell $K_i$.

Next, to construct the numerical upscaling equation for (\ref{eq:monotonePDE}), we will
define a global downscaling operator $F_1$ such that $F_1(\overline{U})\in H_{0}^{1}(\Omega)$ and
\begin{eqnarray}
\int_{\Omega}\kappa(x,\nabla F_1(\overline{U})) \cdot \nabla v-s(F_{2}(\overline{U}),v) & =0, \quad \forall v\in H_{0}^{1}(\Omega), \label{eq:global1} \\
s(F_{1}(\overline{U}),\mu_{i,j}) & =\bar{U}_{i,j}, \quad\forall \mu_{i,j}\in V_{aux}. \label{eq:global2}
\end{eqnarray}
Next, we will define a projection operator $\Pi:V\rightarrow V_{aux}$
such that
\[
\int_{\Omega}\tilde{\kappa}\Pi(u)\mu=\int_{\Omega}\tilde{\kappa}u\mu, \quad \forall\mu\in V_{aux}.
\]
The global solution $U_{glo}\in V_{aux}$ is defined by
\[
\int_{\Omega}F_{2}(U_{glo})v=\int_{\Omega}fv, \quad\forall v\in V_{aux}
\]
and the global downscaled solution $u_{glo}$ is defined as $u_{glo}=F_{1}(U_{glo})$.

\noindent
{\bf Approximation by global basis}

We summarize the main steps:
\begin{enumerate}
\item Find $U_{glo}\in V_{aux}$
\begin{equation}
\label{eq:nlmc1}
\int_{\Omega}F_{2}(U_{glo})v=\int_{\Omega}fv, \quad\forall v\in V_{aux}.
\end{equation}

\item Define
\begin{equation}
\label{eq:nlmc2}
u_{glo}=F_{1}(U_{glo}).
\end{equation}

\end{enumerate}

Next, we will construct the nonlinear NLMC method.
For each $K\in\mathcal{T}_{H}$, we will define a local downscaling
operator $F^{loc,K}$ such that $F_{1}^{loc,K}(\bar{U})\in H_{0}^{1}(K^{+})$ and

\begin{align*}
\int_{K^{+}}\kappa(x,\nabla F_{1}^{loc,K}(\bar{U}))\cdot\nabla v-\int_{K^{+}}\tilde{\kappa}F_{2}^{loc,K}(\bar{U})v & =0, \quad \forall v\in H^{1}(K^{+}), \\
\int_{K^{+}}\tilde{\kappa}F_{1}^{loc,K}(\bar{U})\mu_{i,j} & =\bar{U}_{i,j}, \quad\text{for }\mu_{i,j}\in V_{aux}.
\end{align*}
The multiscale solution $U_{ms}\in V_{aux}$ is defined by
\[
\sum_{K}\int_{K}F_{2}^{loc,K}(U_{ms})v=\int_{\Omega}fv, \quad \forall v\in V_{aux}
\]
and the downscaled multiscale solution $u_{ms}$ is defined as $u_{ms}=F_{1}^{ms}(U_{ms}):=\sum_{K}\chi_{K}F_{1}^{loc,K}(U_{ms})$
where $\chi_{K}$ is a partition of unity such that $\sum_{K}\chi_{K}\equiv1$
with $\sum_{K}|\nabla\chi_{K}|^{2}\leq C\sum_{i}|\nabla\chi_{i}|^{2}$
and $\text{supp}\{\chi_{K}\}\subset K^{+}=\cup_{\overline{K}_{i}\cap\overline{K}\neq\emptyset}\overline{K}_{i}.$

\noindent
{\bf Nonlinear NLMC method}

We summarize the main steps:
\begin{enumerate}
\item Find $U_{ms}\in V_{aux}$
\begin{equation}
\label{eq:nlmc3}
\sum_{K}\int_{K}F_{2}^{loc,K}(U_{ms})v=\int_{\Omega}fv, \quad \forall v\in V_{aux}.
\end{equation}

\item Define
\begin{equation}
\label{eq:nlmc4}
u_{ms}=F_{1}^{ms}(U_{ms}).
\end{equation}

\end{enumerate}

The analysis of our scheme is based on three assumptions. We summarize them below.

\noindent
\textbf{Assumption 1:} For all $K\in\mathcal{T}_{H}$, $v\in V(K)$,
we have
\[
\cfrac{\|(I-\pi)v\|_{s}}{\|v\|_{a}}\leq C H.
\]
\textbf{Assumption 2:} For $K\in\mathcal{T}_{H}$, $v_{aux}\in V_{aux}(K)$,
there exist a function $w\in H_{0}^{1}(K)$ such that
\[
\|v_{aux}\|_{s(K)}^{2}\leq s_K(v_{aux},w),\quad \|w\|_{a(K)}\leq C\|v_{aux}\|_{s(K)}.
\]
\textbf{Assumption 3:} There exist a $C_{\kappa}>0$ such that
\[
\|v\|_{s}\leq C_{\kappa}\|v\|_{a}, \quad \forall v\in V.
\]

We will prove the following lemma
for the stability of the downscale map.

\begin{lemma}
\label{lem:stablity}By assumption 2, we have
\[
\|F_{1}(\overline{U})\|_{a}\leq CC_{1}C_{2}^{-1}\|\overline{U}\|_{s}
\]
and
\[
\|F_{1}^{loc,K}(\overline{U})\|_{a}\leq C C_1 C_2^{-1} \|\overline{U}\|_{s}
\]
\end{lemma}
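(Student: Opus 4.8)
The plan is to test the Euler--Lagrange system defining $F_{1}(\overline{U})$ against cleverly chosen functions and to control the Lagrange multiplier $F_{2}(\overline{U})$ by means of Assumption 2. Write $\phi = F_{1}(\overline{U}) \in H_{0}^{1}(\Omega)$ and $\lambda = F_{2}(\overline{U}) \in V_{aux}$, so that (\ref{eq:global1})--(\ref{eq:global2}) read $A_{\Omega}(\phi,v) = s(\lambda,v)$ for all $v \in H_{0}^{1}(\Omega)$ together with the constraint $s(\phi,\mu_{i,j}) = \overline{U}_{i,j}$. First I would take $v = \phi$ in the first equation and invoke the coercivity estimate of Lemma \ref{lem:coercive} to obtain $C_{2}\|\phi\|_{a}^{2} \le A_{\Omega}(\phi,\phi) = s(\lambda,\phi)$.

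Next I would rewrite the right-hand side in terms of $\overline{U}$. Since $\lambda \in V_{aux}$ and $\phi - \Pi\phi$ is $s$-orthogonal to $V_{aux}$, we have $s(\lambda,\phi) = s(\lambda,\Pi\phi)$; and because $\Pi\phi$ is exactly the element of $V_{aux}$ whose degrees of freedom are $s(\phi,\mu_{i,j}) = \overline{U}_{i,j}$, it may be identified with $\overline{U}$ under the natural correspondence between $V_{aux}$ and its degrees of freedom, so that $\|\overline{U}\|_{s} = \|\Pi\phi\|_{s}$ and $s(\lambda,\phi) = s(\lambda,\overline{U}) \le \|\lambda\|_{s}\|\overline{U}\|_{s}$ by Cauchy--Schwarz. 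Combined with the previous bound this gives $C_{2}\|\phi\|_{a}^{2} \le \|\lambda\|_{s}\|\overline{U}\|_{s}$, and it remains to absorb $\|\lambda\|_{s}$.

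The crux of the argument, and the step I expect to be the main obstacle, is bounding the multiplier norm $\|\lambda\|_{s}$ by $\|\phi\|_{a}$, which is precisely where Assumption 2 enters. For each coarse block $K$ I would apply Assumption 2 to $\lambda|_{K} \in V_{aux}(K)$ to produce $w_{K} \in H_{0}^{1}(K)$ with $\|\lambda\|_{s(K)}^{2} \le s_{K}(\lambda,w_{K})$ and $\|w_{K}\|_{a(K)} \le C\|\lambda\|_{s(K)}$, then assemble $w = \sum_{K} w_{K} \in H_{0}^{1}(\Omega)$. Because the $w_{K}$ have disjoint supports, summation over $K$ yields $\|\lambda\|_{s}^{2} \le s(\lambda,w)$ and $\|w\|_{a} \le C\|\lambda\|_{s}$. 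Testing the first equation with this $w$ and using the continuity bound of Lemma \ref{lem:coercive} gives $\|\lambda\|_{s}^{2} \le s(\lambda,w) = A_{\Omega}(\phi,w) \le C_{1}\|\phi\|_{a}\|w\|_{a} \le CC_{1}\|\phi\|_{a}\|\lambda\|_{s}$, hence $\|\lambda\|_{s} \le CC_{1}\|\phi\|_{a}$.

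Finally I would substitute this into $C_{2}\|\phi\|_{a}^{2} \le \|\lambda\|_{s}\|\overline{U}\|_{s}$ and cancel one factor of $\|\phi\|_{a}$ to conclude $\|F_{1}(\overline{U})\|_{a} = \|\phi\|_{a} \le CC_{1}C_{2}^{-1}\|\overline{U}\|_{s}$. The local estimate for $F_{1}^{loc,K}(\overline{U})$ follows verbatim, replacing $\Omega$ by the oversampled region $K^{+}$, the form $A_{\Omega}$ by $A_{K^{+}}$, and applying Assumption 2 on $K^{+}$; since $F_{1}^{loc,K}(\overline{U}) \in H_{0}^{1}(K^{+})$ is an admissible test function in the local variational equations, every step carries over with the same constants.
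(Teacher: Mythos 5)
Your proposal is correct and follows essentially the same argument as the paper: test the variational equation with $F_{1}(\overline{U})$ itself, use coercivity from Lemma \ref{lem:coercive}, convert $s(F_{2}(\overline{U}),F_{1}(\overline{U}))$ into $s(\overline{U},F_{2}(\overline{U}))$ via the constraint, and then bound the multiplier norm $\|F_{2}(\overline{U})\|_{s}$ by $CC_{1}\|F_{1}(\overline{U})\|_{a}$ using the test function supplied by Assumption 2 together with the continuity bound. If anything, your explicit assembly $w=\sum_{K}w_{K}$ over all coarse blocks makes the global multiplier estimate cleaner than the paper's per-block statement, but the underlying route is identical.
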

\begin{proof}
First, by Lemma \ref{lem:coercive} and (\ref{eq:global1}), we have
\[
\|F_{1}(\overline{U})\|_{a}^{2}\leq C_{2}^{-1}A(F_{1}(\overline{U}),F_{1}(\overline{U}))=s(F_{2}(\overline{U}),F_{1}(\overline{U}))
\]
and by (\ref{eq:global2}), we have
\[
s(F_{2}(\overline{U}),F_{1}(\overline{U}))=s(\overline{U},F_{2}(\overline{U})).
\]
Therefore, we have
\[
\|F_{1}(\overline{U})\|_{a}^{2}\leq C_{2}^{-1}\|\overline{U}\|_{s}\|F_{2}(\overline{U})\|_{s}.
\]
By Assumption 2, there exist a function $w\in H_{0}^{1}(K)$ such that
\begin{equation*}
\|F_{2}(\overline{U})\|_{s(K)}^{2}  \leq s(F_{2}(\overline{U}),w)
\quad\text{and}\quad
\|w\|_{a} \leq \|F_{2}(\overline{U})\|_{s(K)}.
\end{equation*}
Hence, we have
\begin{align*}
\|F_{2}(\overline{U})\|_{s(K)}^{2} & \leq s(F_{2}(\overline{U}),w)=\int_{\Omega}\kappa(x,\nabla F_{1}(\overline{U}))\cdot\nabla w\\
 & \leq C_{1}\|F_{1}(\overline{U})\|_{a}\|w\|_{a}\leq CC_{1}\|F_{2}(\overline{U})\|_{s(K)}\|F_{1}(\overline{U})\|_{a}.
\end{align*}
This shows the first required inequality.
Using a similar argument, we can prove that
\[
\|F_{1}^{loc,K}(\overline{U})\|_{a}\leq CC_{1}C_{2}^{-1}\|\overline{U}\|_{s}.
\]
This completes the proof of the lemma.
\end{proof}

In the following lemma, we will give an error bound for the solution $F_1(U_{glo})$.

\begin{lemma}
\label{lem:gloerror}
Let $u$ be the solution of (\ref{eq:monotonePDE}) and $F_1(U_{glo})$ be the solution of (\ref{eq:nlmc1})-(\ref{eq:nlmc2}).
We have
\begin{align*}
\|u-F_{1}(U_{glo})\|_{a} & \leq CC_2^{-1} H\|(I-\Pi) (f \tilde{\kappa}^{-1}) \|_{s}.
\end{align*}
\end{lemma}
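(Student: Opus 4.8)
The plan is to reduce the estimate to a single error equation tested against $e := u - F_1(U_{glo})$, and then to play the coercivity of $A$ against the approximation property in Assumption 1. The first and most important step is to extract a clean variational characterization of $u_{glo} = F_1(U_{glo})$. Evaluating the first line of the downscaling problem (\ref{eq:global1}) at $U_{glo}$ gives $A(u_{glo}, v) = s(F_2(U_{glo}), v)$ for every $v \in H^1_0(\Omega)$, while the coarse-grid equation (\ref{eq:nlmc1}), which in its weighted variational form consistent with the general framework reads $s(F_2(U_{glo}), w) = \int_\Omega f w = s(f\tilde\kappa^{-1}, w)$ for all $w \in V_{aux}$, forces $F_2(U_{glo}) = \Pi(f\tilde\kappa^{-1})$, because $F_2(U_{glo}) \in V_{aux}$ and $\Pi$ is the $s$-orthogonal projection onto $V_{aux}$. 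Substituting this identity back yields the key representation
\[
A(u_{glo}, v) = s\big(\Pi(f\tilde\kappa^{-1}), v\big), \qquad \forall v \in H^1_0(\Omega),
\]
whereas the exact solution of (\ref{eq:monotonePDE}) satisfies $A(u, v) = \int_\Omega f v = s(f\tilde\kappa^{-1}, v)$.

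Subtracting the two identities produces the error equation $A(u, v) - A(u_{glo}, v) = s\big((I-\Pi)(f\tilde\kappa^{-1}), v\big)$, valid for all $v \in H^1_0(\Omega)$, and I would choose $v = e$. The right-hand side is handled by $s$-orthogonality: since $(I-\Pi)(f\tilde\kappa^{-1})$ is $s$-orthogonal to $V_{aux}$ and $\Pi e \in V_{aux}$, I may replace $e$ by $(I-\Pi)e$, so that
\[
s\big((I-\Pi)(f\tilde\kappa^{-1}), e\big) = s\big((I-\Pi)(f\tilde\kappa^{-1}), (I-\Pi)e\big) \le \|(I-\Pi)(f\tilde\kappa^{-1})\|_s \, \|(I-\Pi)e\|_s
\]
by Cauchy--Schwarz. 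The factor $\|(I-\Pi)e\|_s$ is then controlled by Assumption 1: because each $\mu_{i,j}$ is supported in a single coarse block and $s$ is an integral, the global projection $\Pi$ decouples over $\mathcal{T}_H$ into the local projections $\pi$, so summing the local bounds $\|(I-\pi)e\|_{s(K)} \le CH\|e\|_{a(K)}$ over $K$ gives $\|(I-\Pi)e\|_s \le CH\|e\|_a$.

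For the left-hand side I would invoke the monotonicity of $\kappa$ to obtain $C_2\|e\|_a^2 \le A(u, e) - A(u_{glo}, e)$; this is the single place where the coercivity hypothesis (\ref{eq:lower_bound}) enters, and I expect it to be the main obstacle. The delicate point is that bounding the \emph{difference} $A(u,e) - A(u_{glo}, e)$ from below by $C_2\|e\|_a^2$ genuinely requires strong monotonicity of $\kappa(x,\cdot)$ in the difference of its arguments, which is slightly stronger than the literal origin-coercivity used in Lemma \ref{lem:coercive}; I would therefore read (\ref{eq:lower_bound}) in its difference form $(\kappa(x,z)-\kappa(x,v))\cdot(z-v) \ge C_2\overline\kappa|z-v|^2$, which is in any case the form needed for well-posedness of (\ref{eq:monotonePDE}). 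Combining the lower and upper bounds and cancelling one power of $\|e\|_a$ then gives exactly $\|e\|_a \le CC_2^{-1}H\|(I-\Pi)(f\tilde\kappa^{-1})\|_s$. Existence and stability of the downscaling maps $F_1, F_2$, hence of $u_{glo}$, are supplied by Lemma \ref{lem:stablity} through Assumption 2, so no extra argument is needed on that front.
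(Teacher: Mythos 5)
Your proposal is correct and follows essentially the same route as the paper's own proof: identify $F_2(U_{glo})=\Pi(f\tilde{\kappa}^{-1})$, form the error equation, test with $e=u-F_1(U_{glo})$, exploit $s$-orthogonality to insert $(I-\Pi)$, apply Cauchy--Schwarz and Assumption 1, and cancel one factor of $\|e\|_a$ against the monotonicity lower bound. Your remark that (\ref{eq:lower_bound}) must be read in the difference form $(\kappa(x,z)-\kappa(x,v))\cdot(z-v)\geq C_2\overline{\kappa}|z-v|^2$ is exactly right --- the paper invokes (\ref{eq:lower_bound}) for the difference $A(u,e)-A(F_1(U_{glo}),e)$ without comment, so your reading makes explicit what the paper's argument silently assumes.
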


\begin{proof}
First of all, we note that $F_{2}(U_{glo})=\Pi(f \tilde{\kappa}^{-1})$.
So, we have
\[
A(F_{1}(U_{glo}),v)=\int_{\Omega}F_{2}(U_{glo})v=s(\Pi( f \tilde{\kappa}^{-1}),v), \quad \forall v \in H^1_0(\Omega)
\]
 and
\begin{align*}
A(u,v)-A(F_{1}(U_{glo}),v) & =s(f\tilde{\kappa}^{-1}-\Pi( f\tilde{\kappa}^{-1}),v),   \quad \forall v \in H^1_0(\Omega).
\end{align*}
Therefore, by (\ref{eq:lower_bound}), we have
\begin{align*}
C_{2}\|u-F(\Pi u)\|_{a}^{2} & \leq A(u,u-F(U_{glo}))-A(F(U_{glo}),u-F(U_{glo}))\\
 & =s\Big(f\tilde{\kappa}^{-1}-\Pi (f\tilde{\kappa}^{-1}) ,(I-\Pi)(u-F(U_{glo})\Big)\\
 & \leq\| f\tilde{\kappa}^{-1} -\Pi (f\tilde{\kappa}^{-1})\|_{s}\|(I-\Pi)(u-F(U_{glo})\|_{s} \\
& \leq CH\| f\tilde{\kappa}^{-1} -\Pi (f\tilde{\kappa}^{-1}) \|_{s}\|(u-F(U_{glo})\|_{a}
\end{align*}
where the last inequality follows from Assumption 1. This completes the proof.
\end{proof}

In the next lemma, we give a localization result.
To do so, we need some notations for the oversampling domain and the cutoff function with respect to these oversampling domains.
For each coarse cell $K$, we denote $K_{m}^+ \subset \Omega$ as the oversampling coarse region by enlarging
$K$ by $m$ coarse grid layers. For $M>m$, we define $\chi_{M,m}\in\text{span}\{\chi_{i}\}$
such that $0 \leq \chi_{M,m} \leq 1$ and
\begin{align}
\chi_{M,m} & =1,\text{ in }K_{m}^+, \label{cutoff1} \\
\chi_{M,m} & =0,\text{ in }\Omega\backslash K_{M}^+. \label{cutoff2}
\end{align}
Note that, we have $K_{m}^+ \subset K_{M}^+$.

\begin{lemma}
\label{lem:local_estimate}
Assume $K_M^{+}$ is an oversampling region obtained by enlarging the coarse cell $K$ by $M$ coarse grid layers.
Let $\eta_{i}=F_{i}(\overline{U})-F_{i}^{loc,K}(\overline{U})$.
We have
\begin{align*}
\|\eta_{1}\|_{a(K)}^{2} & \leq(1-C^{-1}C_{1}^{-1}C_{2})^{M}\|\eta_{1}\|_{a(K_{M}^{+})}^{2}
\end{align*}
and
\begin{align*}
\|\eta_{2}\|_{s(K)}^{2} & \leq CC_{1}^{2}\|\eta_{1}\|_{a(K)}^{2}\\
 & \leq CC_{1}^{2}(1-C^{-1}C_{1}^{-1}C_{2})^{M}\|\eta_{1}\|_{a(K_{M}^{+})}^{2}.
\end{align*}
\end{lemma}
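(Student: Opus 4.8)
The plan is to prove the spatial decay of the local-to-global error $\eta_1 = F_1(\overline U) - F_1^{loc,K}(\overline U)$ by an energy-minimization argument. The key observation is that both $F_1$ and $F_1^{loc,K}$ are defined by the same constrained saddle-point system (\ref{eq:global1})--(\ref{eq:global2}), the only difference being the domain ($\Omega$ versus $K^+$) and the corresponding test space. Taking differences of the two defining equations, $\eta_1$ should satisfy a homogeneous problem away from $K$: specifically, for test functions $v$ supported outside a neighborhood of $K$, the constraint and the right-hand side coincide, so $\eta_1$ is "$A$-harmonic" there in the appropriate weak sense. This is exactly the structure that drives exponential decay in CEM-GMsFEM.

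First I would set up the cutoff machinery: using the function $\chi_{M,m}$ from (\ref{cutoff1})--(\ref{cutoff2}), which equals $1$ on $K_m^+$ and $0$ outside $K_M^+$, I would test the difference equation for $\eta_1$ against $v = \chi_{M,m}\,\eta_1$ (or a suitable variant). The product rule $\nabla(\chi v) = v\nabla\chi + \chi\nabla v$ then splits the energy into a "good" term controlling $\|\eta_1\|_{a(K_m^+)}$ and an "error" term involving $\nabla\chi_{M,m}$, supported only in the annulus $K_M^+\setminus K_m^+$. The technical Lemma (the second one, bounding $|A_\omega(u,v)|$ by $C_1\|u\|_{a(\omega)}(\|v\|_{a(\omega)}+\|v\|_{s(\omega)})$) is precisely designed to absorb this cutoff term, since $\tilde\kappa = \overline\kappa\sum_i|\nabla\chi_i|^2$ converts the gradient-of-cutoff contribution into an $\|\cdot\|_s$ norm over the annulus. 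Combining with the coercivity and continuity from Lemma \ref{lem:coercive}, I expect to derive a one-step iteration inequality of the form $\|\eta_1\|_{a(K_m^+)}^2 \le (1 - C^{-1}C_1^{-1}C_2)\,\|\eta_1\|_{a(K_{m+1}^+)}^2$, and iterating over the $M$ layers gives the claimed factor $(1-C^{-1}C_1^{-1}C_2)^M$.

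For the second inequality on $\eta_2$, I would invoke Assumption 2 to pick $w\in H^1_0(K)$ with $\|\eta_2\|_{s(K)}^2 \le s_K(\eta_2,w)$ and $\|w\|_{a(K)}\le C\|\eta_2\|_{s(K)}$, then use the first equation of the saddle-point system to rewrite $s_K(\eta_2,w)$ as $A_K(F_1(\overline U),w) - A_K(F_1^{loc,K}(\overline U),w)$. The Lipschitz/continuity bound of Lemma \ref{lem:coercive} gives $|s_K(\eta_2,w)|\le C_1\|\eta_1\|_{a(K)}\|w\|_{a(K)}$, and substituting the bound on $\|w\|_{a(K)}$ and cancelling one power of $\|\eta_2\|_{s(K)}$ yields $\|\eta_2\|_{s(K)}\le CC_1\|\eta_1\|_{a(K)}$, hence $\|\eta_2\|_{s(K)}^2\le CC_1^2\|\eta_1\|_{a(K)}^2$; chaining with the first part finishes the estimate.

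The main obstacle I anticipate is justifying the decay iteration rigorously: one must verify that $\eta_1$ genuinely satisfies the homogeneous constrained problem on the region where $\chi_{M,m}$ transitions — i.e. that testing against $\chi_{M,m}\eta_1$ is legitimate given the Lagrange-multiplier constraints, and that the constraint terms $s(\eta_2, \cdot)$ do not spoil the recursion. Handling the nonlinearity of $\kappa(x,\nabla\cdot)$ (rather than a bilinear form) inside this telescoping argument is the delicate point, since one cannot simply subtract energies; instead one leans on the monotonicity (\ref{eq:lower_bound}) and Lipschitz (\ref{eq:upper_bound}) bounds repackaged in Lemma \ref{lem:coercive}, which is exactly why those were proved first. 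Getting the constants to line up so that the contraction factor stays strictly below $1$ is where the care is needed.
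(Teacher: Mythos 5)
Your proposal follows essentially the same route as the paper's proof: the paper also tests the difference of the two constrained saddle-point systems against cutoff functions $\chi_{m+1,m}\eta_1$ on consecutive layers, uses strong monotonicity and the Lipschitz bound (via the technical lemma) to absorb the annulus terms $\|\eta_1\|_{a(K_{m+1}^+\setminus K_m^+)}$ and the constraint term $s(\eta_2,\chi_{m+1,m}\eta_1)$, derives the one-step contraction $C_2\|\eta_1\|_{a(K_{m+1}^+)}^2\leq C C_1\|\eta_1\|_{a(K_{m+1}^+\setminus K_m^+)}^2$, and iterates over the $M$ layers. Your treatment of $\eta_2$ (Assumption~2, rewriting $s_K(\eta_2,w)$ through the first equation of the system, and cancelling one power of $\|\eta_2\|_{s(K)}$) is exactly the paper's estimate as well.
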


\begin{proof}
The first step of the proof is to show the following inequality
\begin{equation}
\label{eq:recursive}
\int_{K_{m+1}^{+}}\overline{\kappa}|\nabla(F(\bar{U})-F_{K}^{loc}(\bar{U}))|^{2}\leq C\int_{K_{m+1}^{+}\backslash K_{m}^{+}}\overline{\kappa}|\nabla(F(\bar{U})-F_{K}^{loc}(\bar{U}))|^{2},
\quad\forall m\leq M.
\end{equation}
To do so,
we denote $f_{i}=F_{i}(\overline{U})$ and $g_{i}=F_{i}^{loc,K}(\overline{U})$. By (\ref{eq:lower_bound}), we
obtain
\[
C_{2}\int_{K_{m+1}^{+}}\kappa|\nabla(f_{1}-g_{1})|^{2}\leq\int_{K_{m+1}^{+}}\Big(\kappa(x,\nabla f_{1})-\kappa(x,\nabla g_{1})\Big)\cdot\nabla\Big(f_{1}-g_{1}\Big).
\]
Recalling that $\eta_{i}=f_{i}-g_{i}$. We notice that
\[
\int_{K_{m+1}^{+}}\Big(\kappa(x,\nabla f_{1})-\kappa(x,\nabla g_{1})\Big)\cdot\nabla\Big(\chi_{m+1,m}\eta_{1}\Big)=s(\eta_{2},\chi_{m+1,m}\eta_{1}).
\]
Therefore, we have
\begin{align*}
C_{2}\int_{K_{m+1}^{+}}\overline{\kappa}|\nabla(f_{1}-g_{1})|^{2} & \leq s(\eta_{2},\chi_{m+1,m}\eta_{1})+\int_{K_{m+1}^{+}}\Big(\kappa(x,\nabla f_{1})-\kappa(x,\nabla g_{1})\Big)\cdot\nabla\Big((1-\chi_{m+1,m})\eta_{1}\Big)\\
 & =\int_{K_{m+1}\backslash K_{m}}\tilde{\kappa}\eta_{1}\chi_{m+1,m}\eta_{2}-\int_{K_{m+1}\backslash K_{m}}\Big(\kappa(x,\nabla f_{1})-\kappa(x,\nabla g_{1})\Big)\cdot\nabla\Big(\chi_{m+1,m}\eta_{1}\Big).
\end{align*}
Next, we define $K_{m}^{'}=K_{m+1}^{+}\backslash K_{m}^{+}$ and obtain
\begin{align*}
 & -\int_{K_{m}^{'}}\Big(\kappa(x,\nabla f_{1})-\kappa(x,\nabla g_{1})\Big)\cdot\nabla\Big(\chi_{m+1,m}\eta_{1}\Big)\\
\leq & C_{1}\int_{K_{m}^{'}}\overline{\kappa}|\nabla f_{1}-\nabla g_{1}|\cdot\Big(\eta_{1}\nabla\chi_{m+1,m}+\chi_{m+1,m}\nabla\eta_{1}\Big)\\
\leq & C_{1}\|\eta_{1}\|_{a(K_{m}^{'})} \Big(\|\eta_{1}\|_{a(K_{m}^{'})}+\|\eta_{1}\|_{s(K_{m}^{'})} \Big).
\end{align*}
Therefore, we have
\begin{equation}
\label{eq:est1}
\begin{split}
C_{2}\int_{K_{m+1}^{+}}\overline{\kappa} |\nabla(f_{1}-g_{1})|^{2} & \leq C_{1}\|\eta_{1}\|_{a(K_{m}^{'})}(\|\eta_{1}\|_{a(K_{m}^{'})}+\|\eta_{1}\|_{s(K_{m}^{'})})+\int_{K_{m+1}\backslash K_{m}}\tilde{\kappa}\eta_{1}\chi_{m+1,m}\eta_{2}\\
 & \leq CC_{1}\|\eta_{1}\|_{a(K_{m}^{'})}^{2}+(\int_{K_{m+1}\backslash K_{m}}\tilde{\kappa}\eta_{1}^{2})^{\frac{1}{2}}(\int_{K_{m+1}\backslash K_{m}}\tilde{\kappa}\eta_{2}^{2})^{\frac{1}{2}}\\
 & \leq CC_{1}\|\eta_{1}\|_{a(K_{m}^{'})}^{2}+C\|\eta_{1}\|_{a(K_m')}\|\eta_{2}\|_{s(K_m')}.
\end{split}
\end{equation}
Next we will estimate $\|\eta_{2}\|_{s}$. By Assumption 2, for $K\in\mathcal{T}_{H}$,
there exist a $v\in H_{0}^{1}(K)$ such that
\[
\|\eta_{2}\|_{s(K)}\leq s(\eta_{2},v) \quad\text{and}\quad \|v\|_{a}\leq C\|\eta_{2}\|_{s(K)}.
\]
Thus, for $K\subset K_M^{+}$, we have
\begin{equation}
\label{eq:est2}
\begin{split}
\|\eta_{2}\|_{s(K)} & \leq\cfrac{C\int_{K}\Big( \kappa(x,\nabla f_{1})-\kappa(x,\nabla g_{1})\Big)\cdot\nabla v}{\|v\|_{a}}
  \leq CC_{1}\|\eta_{1}\|_{a(K)}.
\end{split}
\end{equation}
Combining (\ref{eq:est1}) and (\ref{eq:est2}), we have
\[
C_{2}\|\eta_{1}\|_{a(K^+_{m+1})}^{2}\leq CC_{1}\|\eta_{1}\|_{a(K_{m}^{'})}^{2}.
\]
This shows (\ref{eq:recursive}).

By using (\ref{eq:recursive}), we have
\begin{align*}
\|\eta_{1}\|_{a(K_{m}^{+})}^{2} & =\|\eta_{1}\|_{a(K_{m+1}^{+})}^{2}-\|\eta_{1}\|_{a(K_{m}^{'})}^{2}\\
 & \leq(1-C^{-1}C_{1}^{-1}C_{2})\|\eta_{1}\|_{a(K_{m+1}^{+})}^{2}
\end{align*}
and we therefore obtain
\[
\|\eta_{1}\|_{a(K)}^{2}\leq(1-C^{-1}C_{1}^{-1}C_{2})^{m}\|\eta_{1}\|_{a(K_{m}^{+})}^{2}.
\]
This gives the first required inequality. The second required inequality follows from (\ref{eq:est2}).
This completes the proof of tis lemma.
\end{proof}

The following result gives an error estimate
for our nonlinear NLMC solution.

\begin{theorem}
Consider the oversampling domain $K_M^{+}$ obtained by enlarging $K$ by $M$ coarse
cell layers.
Let $u$ be the solution of (\ref{eq:monotonePDE}) and $F^{ms}_1(U_{ms})$ be the solution of (\ref{eq:nlmc3})-(\ref{eq:nlmc4}).
Then we have
\[
\|F_{1}^{ms}(U_{ms})-u\|_{a}\leq CH+C_{1}(M)+C_{2}(M)
\]
where
\begin{align*}
C_{1}(M) & =CC_{1}C_{2}^{-1}(1-C^{-1}C_{1}^{-1}C_{2})^{\frac{M}{2}}M^{\frac{d}{2}}\|U_{ms}\|_{s}, \\
C_{2}(M) & =CC_{\kappa}C_{1}^{2}C_{2}^{-2}(1-C^{-1}C_{1}^{-1}C_{2})^{\frac{M}{2}}M^{\frac{d}{2}}\|U_{ms}\|_{s}.
\end{align*}
Moreover, if $M\sim O\Big(\log(H^{-1})+\log(C_{\kappa})\Big)$ and
$H\leq\cfrac{1}{2}$, then we have
\[
\|F_{1}^{ms}(U_{ms})-u\|_{a}\leq CH.
\]
\end{theorem}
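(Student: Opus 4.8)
The plan is to control the total error by the triangle inequality
\[
\|F_{1}^{ms}(U_{ms})-u\|_{a}\leq\|u-F_{1}(U_{glo})\|_{a}+\|F_{1}(U_{glo})-F_{1}^{ms}(U_{ms})\|_{a},
\]
and to estimate the two pieces by entirely different mechanisms. The first piece is the \emph{global} (non-localized) approximation error, which is precisely the content of Lemma~\ref{lem:gloerror}: it is bounded by $CC_{2}^{-1}H\|(I-\Pi)(f\tilde{\kappa}^{-1})\|_{s}\leq CH$. Hence the entire remaining task is to show that the localization error $\|F_{1}(U_{glo})-F_{1}^{ms}(U_{ms})\|_{a}$ is bounded by $C_{1}(M)+C_{2}(M)$, both of which inherit the geometric decay factor produced by Lemma~\ref{lem:local_estimate}.

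To organize the localization error I would insert the global downscaling of the multiscale coarse variables, $F_{1}(U_{ms})$, and split
\[
\|F_{1}(U_{glo})-F_{1}^{ms}(U_{ms})\|_{a}\leq\|F_{1}(U_{ms})-F_{1}^{ms}(U_{ms})\|_{a}+\|F_{1}(U_{glo})-F_{1}(U_{ms})\|_{a}.
\]
For the first term, the partition of unity $\sum_{K}\chi_{K}\equiv1$ gives $F_{1}(U_{ms})-F_{1}^{ms}(U_{ms})=\sum_{K}\chi_{K}\eta_{1}^{(K)}$ with $\eta_{1}^{(K)}=F_{1}(U_{ms})-F_{1}^{loc,K}(U_{ms})$. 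Expanding $\nabla(\chi_{K}\eta_{1}^{(K)})=\chi_{K}\nabla\eta_{1}^{(K)}+\eta_{1}^{(K)}\nabla\chi_{K}$ produces two sums: the $\chi_{K}\nabla\eta_{1}^{(K)}$ sum is measured directly in the $a$-norm and, after the exponential decay $\|\eta_{1}^{(K)}\|_{a(K)}^{2}\leq(1-C^{-1}C_{1}^{-1}C_{2})^{M}\|\eta_{1}^{(K)}\|_{a(K_{M}^{+})}^{2}$ of Lemma~\ref{lem:local_estimate} and the stability bound $\|\eta_{1}^{(K)}\|_{a(K_{M}^{+})}\leq CC_{1}C_{2}^{-1}\|U_{ms}\|_{s}$ of Lemma~\ref{lem:stablity}, yields $C_{1}(M)$. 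The $\eta_{1}^{(K)}\nabla\chi_{K}$ sum is absorbed into the $s$-norm because $\tilde{\kappa}=\overline{\kappa}\sum_{i}|\nabla\chi_{i}|^{2}$, and is then converted back to the decaying $a$-norm through Assumption~3, $\|\cdot\|_{s}\leq C_{\kappa}\|\cdot\|_{a}$; this is the first place the factor $C_{\kappa}$ enters and it feeds into $C_{2}(M)$. In both sums the bounded-overlap property of the oversampled regions $\{K_{M}^{+}\}$ (each point of $\Omega$ lies in at most $O(M^{d})$ of them) contributes the combinatorial factor $M^{d/2}$.

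The main obstacle is the second term $\|F_{1}(U_{glo})-F_{1}(U_{ms})\|_{a}$, that is, controlling the discrepancy between the global and the localized coarse solutions. Because the downscaling map is \emph{nonlinear} one cannot linearize; instead I would subtract the multiscale coarse equation (\ref{eq:nlmc3}) from the global one (\ref{eq:nlmc1}), add and subtract the localized operator evaluated at $U_{glo}$ so as to separate a localization residual from a genuinely nonlinear part, test the difference against $U_{glo}-U_{ms}\in V_{aux}$ (using Assumption~2 to recover the full $s$-norm from the pairing), and use the monotonicity and Lipschitz bounds of Lemma~\ref{lem:coercive} to turn the resulting identity into a coercive estimate for $\|U_{glo}-U_{ms}\|_{s}$. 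The point is that the mismatch between the two coarse equations is driven entirely by $\eta_{2}=F_{2}-F_{2}^{loc,K}$, whose $s$-norm inherits the same exponential decay through the second inequality of Lemma~\ref{lem:local_estimate}, $\|\eta_{2}\|_{s(K)}^{2}\leq CC_{1}^{2}\|\eta_{1}\|_{a(K)}^{2}$. Propagating $\|U_{glo}-U_{ms}\|_{s}$ back to $\|F_{1}(U_{glo})-F_{1}(U_{ms})\|_{a}$ costs a further coercivity factor $C_{2}^{-1}$ and one norm conversion $C_{\kappa}$, which together with the $C_{1}^{2}C_{2}^{-1}$ coming from $\|\eta_{2}\|_{s}$ produces exactly the coefficient $C_{\kappa}C_{1}^{2}C_{2}^{-2}$ of $C_{2}(M)$.

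Finally, with $\|F_{1}^{ms}(U_{ms})-u\|_{a}\leq CH+C_{1}(M)+C_{2}(M)$ established, I would close the argument by choosing the oversampling size $M$. Since $C_{1}(M)$ and $C_{2}(M)$ each carry the geometric factor $(1-C^{-1}C_{1}^{-1}C_{2})^{M/2}$ against only the polynomial growth $M^{d/2}$ and the prefactor $C_{\kappa}$, the choice $M\sim O(\log(H^{-1})+\log(C_{\kappa}))$ forces the decay to dominate, so that $C_{1}(M)+C_{2}(M)\leq CH$ whenever $H\leq\tfrac{1}{2}$. This yields the claimed optimal estimate $\|F_{1}^{ms}(U_{ms})-u\|_{a}\leq CH$.
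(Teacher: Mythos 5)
Your proposal follows the paper's own architecture almost exactly: the same splitting $\|F_1^{ms}(U_{ms})-u\|_a \le \|u-F_1(U_{glo})\|_a + \|F_1(U_{ms})-F_1^{ms}(U_{ms})\|_a + \|F_1(U_{glo})-F_1(U_{ms})\|_a$, with Lemma \ref{lem:gloerror} for the first term, the decay of Lemma \ref{lem:local_estimate} combined with the stability of Lemma \ref{lem:stablity} and the $O(M^d)$ bounded-overlap count for the second, and the same closing argument ($\|U_{ms}\|_s \le C_\kappa(\|F_1^{ms}(U_{ms})-u\|_a+\|u\|_a)$, then choose $M\sim \log(H^{-1})+\log(C_\kappa)$ and absorb the factor $\tfrac12$). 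The only genuine divergence is your treatment of the third term, and there the paper's route is both simpler and sharper. The paper never estimates $\|U_{glo}-U_{ms}\|_s$ at all: subtracting (\ref{eq:nlmc3}) from (\ref{eq:nlmc1}) and testing with auxiliary functions supported in a single cell forces the Lagrange multipliers to coincide cell-by-cell, $F_2(U_{glo})|_K = F_2^{loc,K}(U_{ms})|_K$, so $F_2(U_{glo})-F_2(U_{ms})$ \emph{is} the localization residual $\eta_2$ (evaluated at $U_{ms}$) on every $K$; together with strong monotonicity through (\ref{eq:global1}) and Assumption 3 this yields $C_2(M)$ directly, and the ``genuinely nonlinear part'' you set out to isolate vanishes identically.

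Your alternative for that term --- inserting $F_2^{loc,K}(U_{glo})$, pairing the difference of the coarse equations with $U_{glo}-U_{ms}$, and extracting a coercive bound on $\|U_{glo}-U_{ms}\|_s$ --- can be completed, but not by citing Lemma \ref{lem:coercive} as you do: that lemma concerns the fine-scale operator $A_\omega$ acting on $H^1$ functions, not the coarse-level map $U\mapsto F_2^{loc,K}(U)$. You would have to prove coarse-level monotonicity from scratch (fine-scale strong monotonicity gives the pairing $\ge C_2\|F_1^{loc,K}(U_{glo})-F_1^{loc,K}(U_{ms})\|_a^2$, and the constraint equations plus Assumption 3 are then needed to bring in $\|U_{glo}-U_{ms}\|_s^2$), and likewise a Lipschitz version of Lemma \ref{lem:stablity} to propagate back to $\|F_1(U_{glo})-F_1(U_{ms})\|_a$ --- the lemma as stated is only a bound at the origin, and since $F_1$ is nonlinear, $F_1(U)-F_1(V)\neq F_1(U-V)$. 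Both pieces are provable by the same techniques, but they are real work that your plan glosses over, and they cost extra factors: the coarse coercivity degrades by additional powers of $C_\kappa$ and the Lipschitz propagation by $C_1C_2^{-1}$, so your claim of recovering ``exactly'' the coefficient $C_\kappa C_1^2 C_2^{-2}$ does not hold; moreover your localization residual is evaluated at $U_{glo}$, so its decay bound carries $\|U_{glo}\|_s$ rather than the $\|U_{ms}\|_s$ appearing in the theorem. None of this is fatal --- the exponential factor still dominates and $M\sim O(\log(H^{-1})+\log(C_\kappa))$ still closes the argument --- but the intermediate estimate you would obtain is weaker than, and not literally, the stated $C_1(M)+C_2(M)$ bound, whereas the paper's cell-by-cell multiplier identity gives it at no extra cost.
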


\begin{proof}
We will analyze the error by first separating the error into three parts as follows
\[
\|F_{1}^{ms}(U_{ms})-u\|_{a}\leq\|F_{1}(U_{glo})-u\|_{a}+\|F_{1}(U_{ms})-F_{1}^{ms}(U_{ms})\|_{a}+\|F_{1}(U_{glo})-F_{1}^{ms}(U_{ms})\|_{a}.
\]
By Lemma \ref{lem:gloerror}, we have
\[
\|F_{1}(U_{glo})-u\|_{a}\leq CH\|(I-\Pi)(f \tilde{\kappa}^{-1})\|_{s}
\]
and by Lemma \ref{lem:local_estimate}, we have
\begin{align*}
\|F_{1}(U_{ms})-F_{1}^{ms}(U_{ms})\|_{a}^{2} & \leq\sum_{K}\|F_{1}(U_{ms})-F_{1}^{ms}(U_{ms})\|_{a(K)}^{2}\\
 & \leq(1-C^{-1}C_{1}^{-1}C_{2})^{M}\sum_{K}\|F_{1}(U_{ms})-F_{1}^{ms}(U_{ms})\|_{a(K_{M})}^{2}\\
 & \leq2(1-C^{-1}C_{1}^{-1}C_{2})^{M}\sum_{K}\Big(\|F_{1}(U_{ms})\|_{a(K_{M})}^{2}+\|F_{1}^{ms}(U_{ms})\|_{a(K_{M})}^{2}\Big).
\end{align*}
By Lemma \ref{lem:stablity}, we have
\[
\|F_{1}(U_{ms})\|_{a}^{2}\leq CC_{1}^{2}C_{2}^{-2}\|U_{ms}\|_{s}^{2}
\]
and
\[
\|F_{1}^{ms}(U_{ms})\|_{a(K_{M})}^{2}\leq CC_{1}^{2}C_{2}^{-2}\|U_{ms}\|_{s(K_{M})}^{2}.
\]
Therefore, we obtain
\[
\sum_{K}\Big(\|F_{1}(U_{ms})\|_{a(K_{M})}^{2}+\|F_{1}^{ms}(U_{ms})\|_{a(K_{M})}^{2}\Big)\leq CC_{1}^{2}C_{2}^{-2}M^{d}\|U_{ms}\|_{s}^{2}.
\]

Next, we will estimate the term $\|F_{1}(U_{glo})-F_{1}(U_{ms})\|_{a}$.
By Lemma \ref{lem:coercive} and Assumption 3, we have
\begin{align*}
\|F_{1}(U_{glo})-F_{1}(U_{ms})\|_{a}^{2} & \leq C_{2}^{-1}s(F_{2}(U_{glo})-F_{2}(U_{ms}),F_{1}(U_{glo})-F_{1}(U_{ms}))\\
 & \leq C_{2}^{-1}\|F_{2}(U_{glo})-F_{2}(U_{ms})\|_{s}\|F_{1}(U_{glo})-F_{1}(U_{ms})\|_{s}\\
 & \leq C_{\kappa}C_{2}^{-1}\|F_{2}(U_{glo})-F_{2}(U_{ms})\|_{s}\|F_{1}(U_{glo})-F_{1}(U_{ms})\|_{a}
\end{align*}
and by Lemma \ref{lem:local_estimate}, we have
\begin{align*}
\sum_{K}\|F_{2}(U_{glo})-F_{2}(U_{ms})\|_{s(K)}^{2} & =\sum_{K}\|F_{2}^{loc,K}(U_{ms})-F_{2}(U_{ms})\|_{s(K)}^{2}\\
 & \leq CC_{1}^{2}(1-C^{-1}C_{1}^{-1}C_{2})^{M}\sum_{K}\|F_{1}^{loc,K}(U_{ms})-F_{1}(U_{ms})\|_{a(K_{M}^{+})}^{2}\\
 & \leq CC_{1}^{4}C_{2}^{-2}(1-C^{-1}C_{1}^{-1}C_{2})^{M}M^{d}\|U_{ms}\|_{s}^{2}.
\end{align*}
Therefore, we have
\begin{align*}
\|F_{1}(U_{glo})-F_{1}(U_{ms})\|_{a(K)} & \leq CC_{\kappa}C_{1}^{2}C_{2}^{-2}(1-C^{-1}C_{1}^{-1}C_{2})^{\frac{M}{2}}M^{\frac{d}{2}}\|U_{ms}\|_{s}.
\end{align*}
Combining the above results, we obtain
\[
\|F_{1}^{ms}(U_{ms})-u\|_{a}\leq CH\|(I-\Pi)\frac{f}{\tilde{\kappa}}\|_{s}+C(C_{1}C_{2}^{-1}+C_{\kappa}C_{1}^{2}C_{2}^{-2})(1-C^{-1}C_{1}^{-1}C_{2})^{\frac{M}{2}}M^{\frac{d}{2}}\|U_{ms}\|_{s}.
\]

To show the second part of the theorem,
we notice that
\begin{align*}
\|U_{ms}\|_{s} & \leq\|F_{1}^{ms}(U_{ms})\|_{s}\leq C_{\kappa}\|F_{1}^{ms}(U_{ms})\|_{a}\\
 & \leq C_{\kappa}\Big(\|F_{1}^{ms}(U_{ms})-u\|_{a}+\|u\|_{a}\Big).
\end{align*}
If $M$ is large enough such that
\[
M\geq\frac{2\log\Big(C(C_{1}C_{2}^{-1}+C_{\kappa}C_{1}^{2}C_{2}^{-2})\Big)+2\log(C_{\kappa})+\log(H^{-1})-d\log(M)}{\log\Big((1-C^{-1}C_{1}^{-1}C_{2})^{-1}\Big)},
\]
then we have
\[
CC_{\kappa}(C_{1}C_{2}^{-1}+C_{\kappa}C_{1}^{2}C_{2}^{-2})(1-C^{-1}C_{1}^{-1}C_{2})^{\frac{M}{2}}M^{\frac{d}{2}}\leq H\leq\cfrac{1}{2}
\]
and
\begin{align*}
\|F_{1}^{ms}(U_{ms})-u\|_{a} & \leq CH\|(I-\Pi)( f \tilde{\kappa}^{-1})\|_{s}+C(C_{1}C_{2}^{-1}+C_{\kappa}C_{1}^{2}C_{2}^{-2})(1-C^{-1}C_{1}^{-1}C_{2})^{\frac{M}{2}}M^{\frac{d}{2}}\|U_{ms}\|_{s}\\
 & \leq CH\|(I-\Pi)(f \tilde{\kappa}^{-1})\|_{s}+\frac{1}{2}\|F_{1}^{ms}(U_{ms})-u\|_{a}+H\|u\|_{a}.
\end{align*}
This completes the proof of the theorem.

\end{proof}

\section{Numerical results}
 \label{sec:numresults}

In this section, we present numerical results for the proposed method. 
In our examples, we will use simplified local problems to compute
macroscale parameters. These local computations will involve
machine learning algorithms.
We consider following model problems in fractured and heterogeneous porous media:
\begin{itemize}
\item[] \textit{Test 1}: Nonlinear flow problem (unsaturated flow problem)
\item[] \textit{Test 2}: Nonlinear transport and flow problem (two-phase flow problem)
\end{itemize}

\begin{figure}[h!]
\begin{center}
\includegraphics[width=0.3\linewidth]{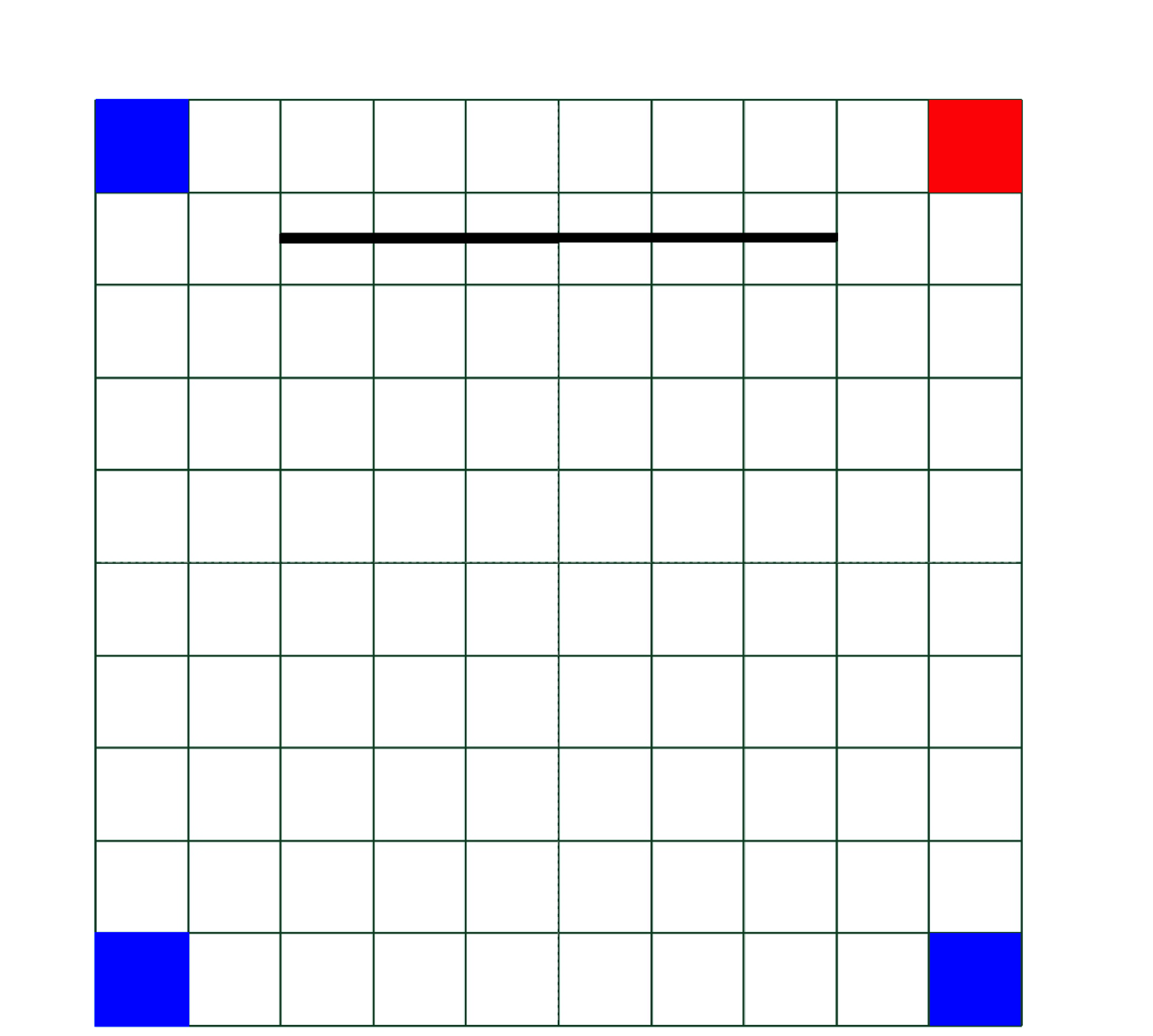}\,\,
\includegraphics[width=0.3\linewidth]{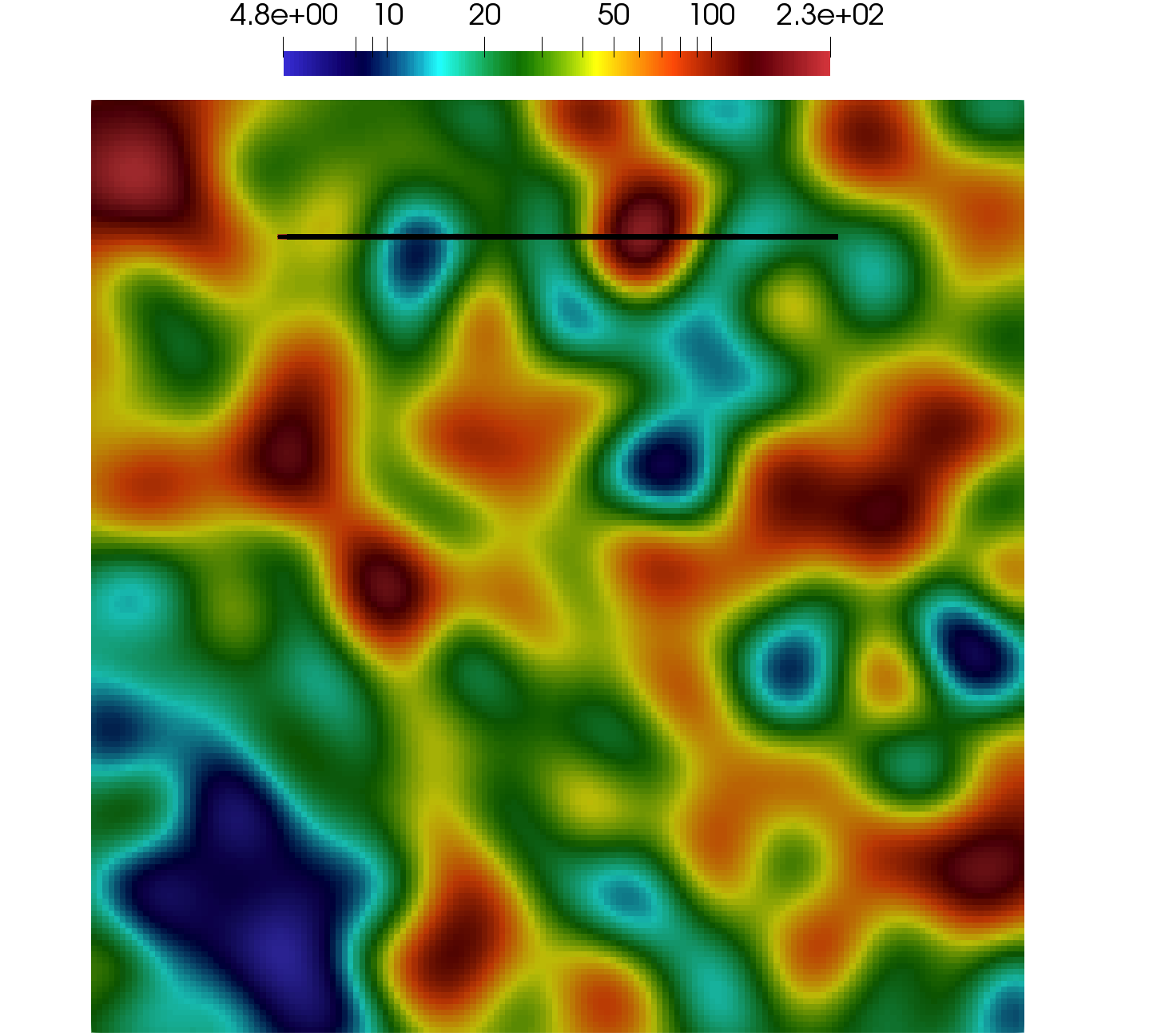}
\end{center}
\caption{Coarse mesh with source term and fracture positions (left).  Heterogeneous porous matrix permeability in $\Omega$ (right)}
\label{fig:kx}
\end{figure}

We solve model problem in 
$\Omega = [0, 1] \times  [0, 1]$ with no flux boundary conditions.  
Heterogeneous porous matrix permeability and location of the source terms and fracture position are depicted in  Figure \ref{fig:kx}. We set source terms $f^{\pm} = \pm q$, $q = 10^3$. We use $10 \times 10$ coarse grid and $160 \times 160$ fine grid. 

\textit{Test 1. }
We consider the solution of the nonlinear equation in fractured heterogeneous media. For the nonlinear coefficients, we use $k^{\alpha \beta}(x, u) = k_s(x) k_r(u)$ with $k_r(u) = \exp(-a |u|)$, $a = 0.1$ ($\alpha, \beta = m,f$).
We set $c^m = 1$, $c^f = 0$, $k_s^f = 10^6$ and $T_{max} = 10^{-3}$ with 20 time steps.

\textit{Test 2. }
We consider the solution of the two-phase flow problem in fractured and heterogeneous porous media. For nonlinear coefficients, we set
$\lambda^w(s)  = s^2$ and $\lambda^n(s)  = (1-s)^2$.
We set $\phi^{\alpha} = 1$ ($\alpha=m,f$), $k^f = 10^3$ and $T_{max} = 6.3 \cdot 10^{-5}$ with 700 time steps. 

\begin{table}[h!]
\begin{center}
\begin{tabular}{ | c | c c c | }
\hline
& MSE & RMSE  (\%)  & MAE (\%) \\
\hline
\multicolumn{4}{|c|}{\textit{Test 1}} \\
\hline
$NN_1$ 	& 0.113 & 3.368 & 2.798 \\
$NN_2$ 	& 0.029 & 1.725 & 1.587 \\
$NN_3$ 	& 0.283 & 5.322 & 4.381 \\
$NN_4$ 	& 0.048 & 2.196 & 2.443 \\
\hline
\end{tabular}\,\,\,\,
\begin{tabular}{ | c | c c c | }
\hline
& MSE & RMSE  (\%)  & MAE (\%) \\
\hline
\multicolumn{4}{|c|}{\textit{Test 2}} \\
\hline
$NN_1$ 	& 0.113 & 3.373 & 1.851 \\
$NN_2$ 	& 0.060 & 2.467 & 1.447 \\
$NN_3$ 	& 0.294 & 5.428 & 2.567 \\
$NN_4$ 	& 0.239 & 4.897 & 2.736 \\
\hline
\end{tabular}
\end{center}
\caption{Learning performance of machine learning algorithm  for \textit{Test 1} and \textit{Test 2} }
\label{tab:ml-t2}
\end{table}

\begin{figure}[h!]
\centering
\includegraphics[width=0.99\linewidth]{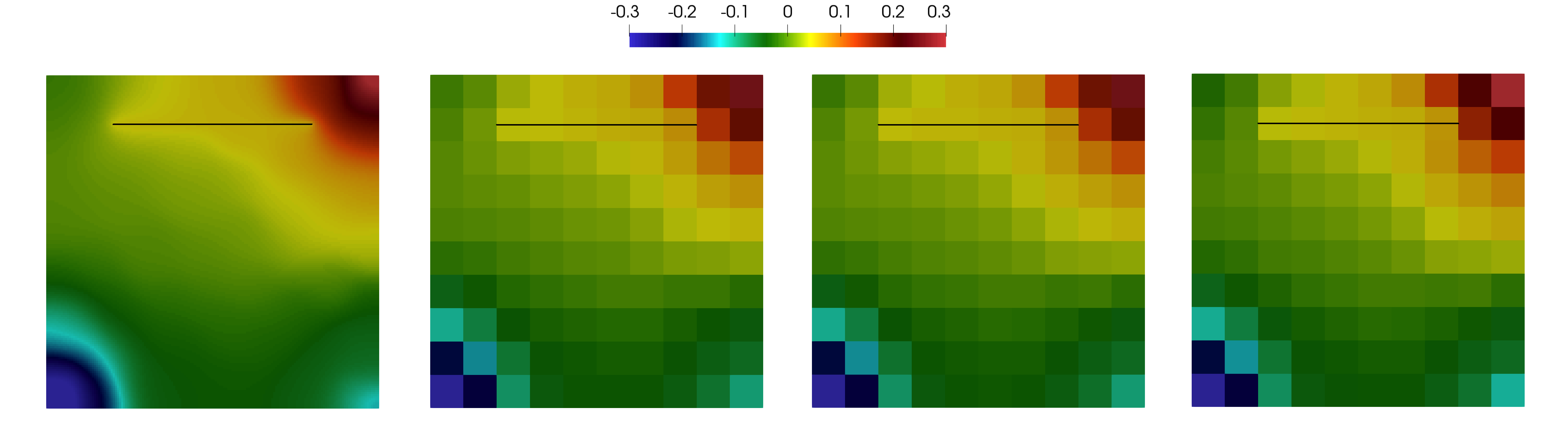}
\caption{Reference fine grid solution ($u^{fine}$), mean value on coarse grid of the fine grid solution ($\overline{u}^{fine}$), coarse grid solution using upscaling method ($\overline{u}^{UP}$)  and coarse grid solution using nonlinear nonlocal machine learning method ($\overline{u}^{NL}$). Nonlinear flow problem (\textit{Test 1}). Pressure on final time $t_m$, $m = 20$ }
\label{fig:uu}
\end{figure}

\begin{figure}[h!]
\centering
\includegraphics[width=0.99\linewidth]{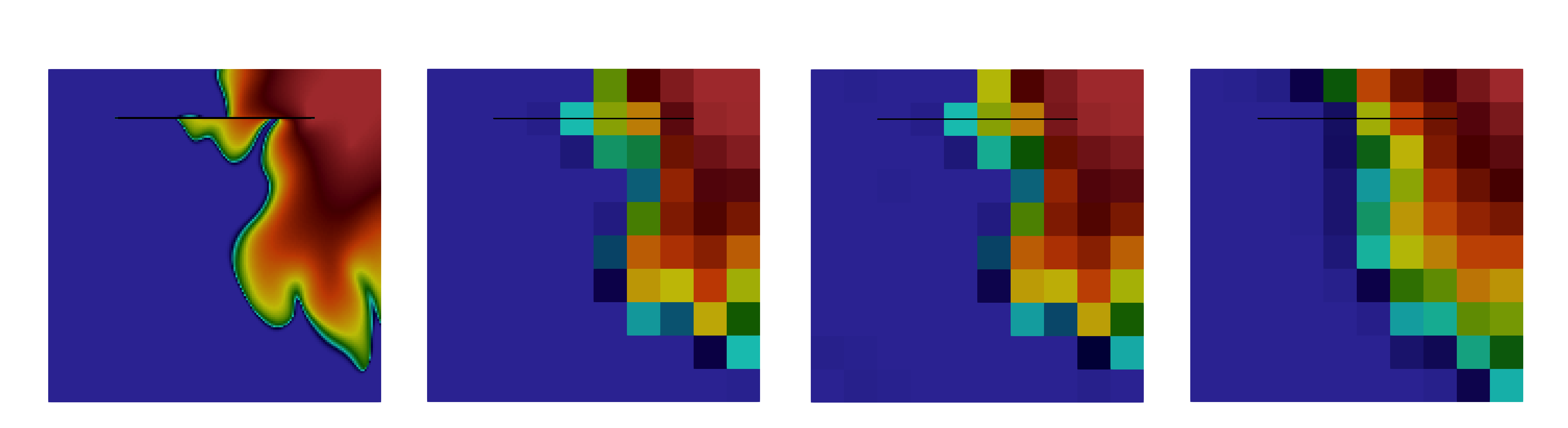}\\
\includegraphics[width=0.99\linewidth]{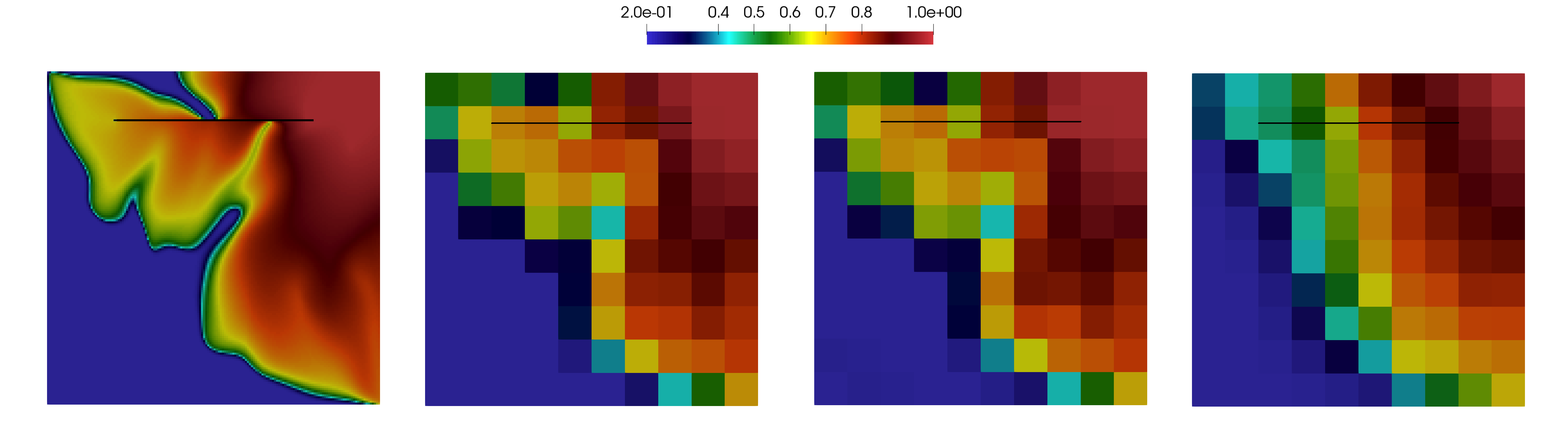}\\
\includegraphics[width=0.99\linewidth]{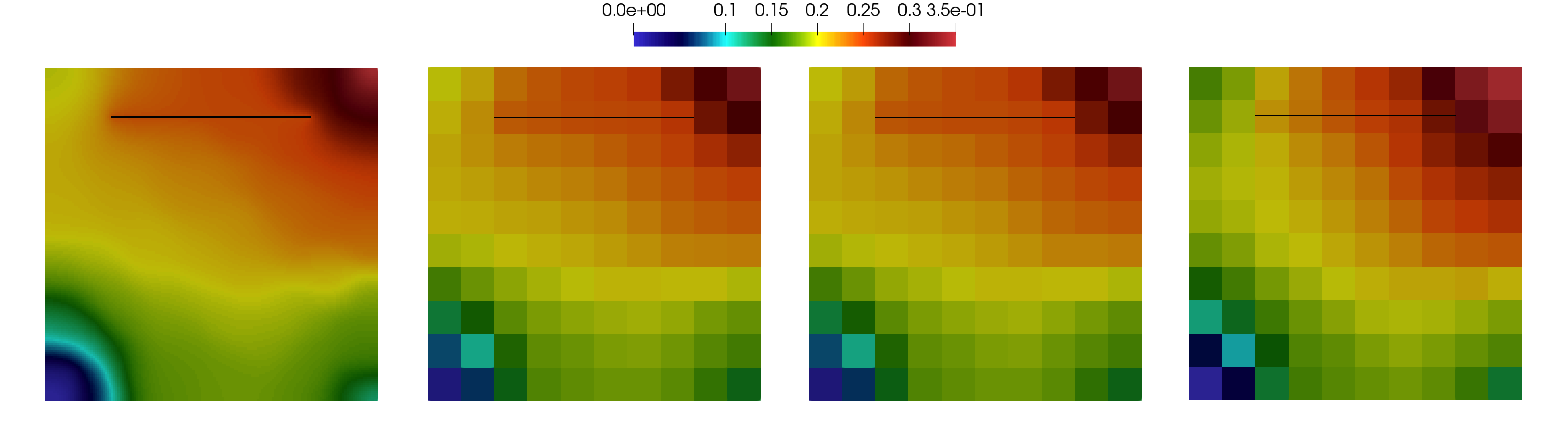}
\caption{Reference fine grid solution ($s^{fine}$, $p^{fine}$), mean value on coarse grid of the fine grid solution ($\overline{s}^{fine}$, $\overline{p}^{fine}$), coarse grid solution using upscaling method ($\overline{s}^{UP}$, $\overline{p}^{UP}$)  and coarse grid solution using nonlinear nonlocal machine learning method ($\overline{s}^{NL}$, $\overline{p}^{NL}$).
Nonlinear flow and transport problem (\textit{Test 2}).
First row: saturation for time $t_m$, $m = 300$.
Second row: saturation for time $t_m$, $m = 700$.
Third row: pressure for time $t_m$, $m = 700$ }
\label{fig:uu-t2}
\end{figure}

Each sample $X_l$ contains the information about heterogeneous permeability and fracture positions up to the fine grid resolution in local domain, coarse grid mean value of the solution in oversampled local domain
\[
\text{\textit{Test 1}:} \, 
X_l = (X_l^k, X_l^f, X_{l+}^{\overline{p}^{m}}),
\quad 
\text{\textit{Test 2}:} \, 
X_l = (X_l^k, X_l^f, X_{l+}^{\overline{p}^{\alpha}}, X_{l+}^{\overline{s}^{\alpha}}, X_{l+}^{\overline{p}^{\beta}}, X_{l+}^{\overline{s}^{\beta}})
\]
and output
\[
\text{\textit{Test 1}:} \, 
Y_l = (T_l^{\alpha \beta, NL} ), \quad \alpha,\beta = m,, 
\quad 
\text{\textit{Test 2}:} \, 
Y_l = (T_l^{\alpha \beta, NL}, T_l^{w, \alpha \beta, NL}), \quad \alpha,\beta = m,f.
\]
Each dataset is divided into training and validation sets with $80:20$ ratio. 

For the training of the neural networks, we use a global dataset, where we extract local information from the fine grid calculations on the global domain $\Omega$.  
We train four neural networks for each type of transmissibility: $NN_1$ for horizontal coarse edges for matrix-matrix flow, $NN_2$ for vertical coarse edges s for matrix-matrix flow, $NN_3$ for matrix - fracture flow and $NN_4$ for fracture - fracture flow.
For calculations, we use $150$ epochs with a batch size $N_b = 90$ and Adam optimizer with learning rate $\epsilon = 0.001$.
For accelerating of the training process of the multi-input CNN, we use GPU.
We use $3 \times 3$ convolutions and $2 \times 2$ maxpooling layers with RELU activation for $X^k$ and $X^f$, and $3 \times 3$ convolutions with RELU activation for $X^{\overline{p}^{m}}$. For each input data, we have 2 layers of CNN with one final fully connected layer. Convolution layer contains  8 and 16 feature maps for $X^k$ and $X^f$; and  4 and 8 feature maps for $X^{\overline{p}^{m}}$. We use dropout with rate 10 \% in each layer in order to prevent over-fitting.
Finally, we combine CNN output and perform two additional fully connected layers with size 200 and 1(one final output).
Presented algorithm is used to learn dependence between multi-input data and upscaled nonlinear transmissibilities.

For error calculation on the dataset, we used  mean square errors, relative mean absolute and relative root mean square errors
\[
MSE = \sum_i |Y_i - \tilde{Y}_i|^2,
\quad
RMSE = \sqrt{ \frac{\sum_i |Y_i - \tilde{Y}_i|^2 }{\sum_i |Y_i|^2 } },
\quad
MAE = \frac{\sum_i |Y_i - \tilde{Y}_i|  }{\sum_i |Y_i|},
\]
where $Y_i$  and $\tilde{Y}_i$ denotes reference and predicted values for sample $X_i$ 
Learning performance for neural networks are presented in Table \ref{tab:ml-t2} 
for \textit{Test 1} and   \textit{Test 2}. 
We observe a good convergence with small error for each neural network.

Next, we consider errors between solution of the coarse grid problem with the
reference  and predicted upscaled transmissibilities.  
To measure difference between reference solution and coarse grid solution, we compute relative $L_2$ error
\[
e(\overline{u}) = \sqrt{
\frac{\sum_{i = 1}^{N^H} (\overline{u}^{fine}_i - \overline{u}_i)^2 }{\sum_{i = 1}^{N^H}  (\overline{u}^{fine}_i)^2 }
},
\]
where $u = p, s$,  $\overline{u}^{fine}$ is the reference solution (mean value on coarse grid of the fine grid solution) and $\overline{u}$ is the solution on the coarse grid. 
In Figure \ref{fig:uu}, we depict solution of the problem for \textit{Test 1} on the fine grid, coarse grid upscaled solution using classic approach  and for new method presented ($u^{fine}$, $\overline{u}^{fine}$, $\overline{u}^{UP}$ and $\overline{u}^{NL}$).  We have $e(\overline{u}^{UP}) = 11.773 \%$  and $e(\overline{u}^{NL}) = 2.155 \%$  at final time.

In Figure \ref{fig:uu-t2}, we depict the solution of the problem  for \textit{Test 2}.  On the first column, we depict a reference fine grid solution ($s^{fine}$, $p^{fine}$), mean value on coarse grid of the fine grid solution ($\overline{s}^{fine}$, $\overline{p}^{fine}$) on the second column, coarse grid solution using upscaling method ($\overline{s}^{UP}$, $\overline{p}^{UP}$) on the third column and coarse grid solution using nonlinear nonlocal machine learning method ($\overline{s}^{NL}$, $\overline{p}^{NL}$) on the fourth column.
On the first, second and third rows, we show a saturation for time $t_m$, $m = 300, 700$ and on fourth row, we have pressure for time $t_m$, $m = 700$.  Fine grid (reference) solution is performed using finite volume approximation with embedded discrete fracture model, where for error calculations, 
we used a mean values of the reference solution on the coarse grid, $\overline{p}^{fine}$ and $\overline{s}^{fine}$.
On the last column of the Figure \ref{fig:uu-t2}, we depict a coarse grid solution using nonlinear nonlocal transmissibilities that calculate based on the machine learning approach.
For machine learning approach, we have
$e(\overline{p}^{NL}) =  0.281 \%$, $ e(\overline{s}^{NL}) = 3.512 \%$, and for upscaling
$e(\overline{p}^{UP}) = 14.063 \%$, $ e(\overline{s}^{UP}) = 13.354 \%$ at final time $t_m$, $m = 700$.

\section{Conclusions}

In the paper, we present a general nonlinear upscaling framework
for
nonlinear differential equations with multiscale coefficients.
The framework is built on nonlinear nonlocal multi-continuum
upscaling concept.
The approach first
identifies test functions for each coarse block, which are used
to identify macroscale variables (called continua).
 In the second stage,
we solve nonlinear
local problems in oversampled regions with some constraints
defined via test functions.
Simplified local problems are proposed for numerical results.
Deep learning algorithms are used to approximate the nonlinear fluxes
that are derived in nonlinear upscaling.
 In the final
stage, macroscale formulation is given and it seeks
the values of
macroscopic variables such that the downscaled field
solves the global problem in a weak sense defined using the test function.
We present an analysis of our approach for an example nonlinear problem.
 We present numerical results for several porous media
applications, including two-phase flow and transport.

\section*{Acknowledgements}

The research of Eric Chung is partially supported by the Hong Kong RGC General Research Fund (Project numbers 14304217 and 14302018)
and CUHK Faculty of Science Direct Grant 2018-19.

\bibliographystyle{plain}
\bibliography{references,references1,references2}

\end{document}